\newtheorem{theorem}{Theorem}
\theoremstyle{plain}
\newtheorem{acknowledgement}{Acknowledgement}
\newtheorem{corollary}{Corollary}
\newtheorem{definition}{Definition}
\newtheorem{lemma}{Lemma}
\numberwithin{equation}{section}
\begin{document}
\title[Short Title]{On some strong convergence results of a new Halpern-type
iterative process for quasi-nonexpansive mappings and accretive operators in
Banach spaces}
\author{$^{1}$Kadri DOGAN}
\address{Department of Mathematical Engineering, Yildiz Technical
University, Davutpasa Campus, Esenler, 34210 \.{I}stanbul, Turkey}
\email{dogankadri@hotmail.com}
\author{$^{2}$Vatan KARAKAYA}
\curraddr{[Department of Mathematical Engineering, Yildiz Technical
University, Davutpasa Campus, Esenler, 34210 \.{I}stanbul, Turkey}
\email{vkkaya@yahoo.com}
\subjclass[2010]{ 47H09, 47H10, 37C25}
\keywords{Iterative process, accretive operator, strong convergence; sunny
nonexpansive retraction, uniformly convex Banach space}
\dedicatory{$^{(1,2)}$Department of Mathematical Engineering, Yildiz
Technical University, Davutpasa Campus, Esenler, 34210 \.{I}stanbul, Turkey}

\begin{abstract}
In this study, we introduce a new iterative processes to approximate common
fixed points of an infinite family of quasi-nonexpansive mappings and obtain
a strongly convergent iterative sequence to the common fixed points of these
mappings in a uniformly convex Banach space. Also we prove that this process
approximates to zeros of an infinite family of accretive operators and we
obtain a strong convergence result for these operators. 
\end{abstract}

\maketitle

\section{introduction and preliminaries}

Throughout this study, the set of all non-negative integers and the set of
reel numbers, which we denote by $%
\mathbb{N}
$ and $%
\mathbb{R}
$, respectively.

Geometric properties of Banach spaces and nonlinear algorithms, a topic of
intensive research efforts, in particular within the past 30 years, or so.
Some geometric properties of Banach spaces play a crucial role in fixed
point theory. In the first part of the study, we investigate these geometric
concepts most of which are well known. We begin with some basic notations.

In 1936, Clarkson \cite{Clrk36} achieved a remarkable study on uniform
convexity. It signalled the beginning of extensive research efforts on the
geometry of Banach spaces and its applications. Most of the results
indicated in this work\ were developed in 1991 or later.

Let $C$ be a nonempty, closed and convex set, which is subset of $B$ Banach
space, and let $B^{\ast }$ be the dual space of $B$. We define the modulus
of convexity of $B$, $\delta _{B}(\epsilon )$, as follows:%
\begin{equation*}
\delta _{B}(\epsilon )=\inf \left\{ 1-\frac{\left\Vert a+b\right\Vert }{2}%
:a,b\in \overline{B(0,1)},\left\Vert a-b\right\Vert \geq \epsilon \right\} .
\end{equation*}

The modulus of convexity is a real valued function defined from $[0,2]$ to $%
[0,1]$ which is continuous on $[0,2)$. A Banach space is uniformly convex if
and only if $\ \delta _{B}(\epsilon )>0$ for all $\epsilon >0.$ Let $B$ be a
normed space and $S_{B}=\left\{ a\in B:\left\Vert a\right\Vert =1\right\} $
the unit sphere of $B$. Then norm of $B$ is G\^{a}teaux differentiable at
point $a\in S_{B}$ if for$\ a\in S_{B}$%
\begin{equation*}
\frac{d}{dt}\left( \left\Vert a+tb\right\Vert \right)
|_{t=0}=\lim_{t\rightarrow 0}\frac{\left\Vert a+tb\right\Vert -\left\Vert
a\right\Vert }{t}
\end{equation*}%
exists. The norm of $B$ is said to G\^{a}teaux differentiable if it is G\^{a}%
teaux differentiable at each point of $S_{B}.$In the case, $B$ is called
smooth. The norm of $B$ is said to uniformly G\^{a}teaux differentiable if
for each $b\in S_{B}$, the limit is approached uniformly for $a\in S_{B}$.
Similarly, if the norm of $B$ is \ uniformly G\^{a}teaux differentiable,
then $B$ is called uniformly smooth. A normed space $B$ is called strictly
convex if for all $a,b\in B$, $a\neq b,\left\Vert a\right\Vert =\left\Vert
b\right\Vert =1,$ we have 
\begin{equation*}
\left\Vert \lambda a+\left( 1-\lambda \right) b\right\Vert <1,\text{ \ for
all }\lambda \in \left( 0,1\right) .
\end{equation*}

Now, the result of the above definitions we give the following theorem and
corollary without proofs.

\begin{theorem}
\cite{Chidume09} Let $B$ be a Banach space.

$1)$ $B$ is uniformly convex if and only if $B^{\ast }$ is uniformly smooth.

2) $B$ is uniformly smooth if and only if $B^{\ast }$ is uniformly smooth.
\end{theorem}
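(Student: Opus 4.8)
The plan is to route the argument through the \emph{modulus of smoothness} of $B$ and the classical Lindenstrauss duality identity that ties it to $\delta_B$. For $\tau\ge 0$ put
\[
\rho_B(\tau)=\sup\left\{\frac{1}{2}\bigl(\left\Vert a+b\right\Vert+\left\Vert a-b\right\Vert\bigr)-1:\ a\in S_B,\ \left\Vert b\right\Vert=\tau\right\},
\]
and recall the quantitative reformulation of the definitions in the excerpt: $B$ is uniformly smooth if and only if $\rho_B(\tau)/\tau\to 0$ as $\tau\to 0^{+}$ (this comes from a convexity argument applied to the difference quotients of $t\mapsto\left\Vert a+tb\right\Vert$, which are monotone in $t$). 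The heart of the theorem is the identity
\[
\rho_{B^{\ast}}(\tau)=\sup\left\{\frac{\tau\epsilon}{2}-\delta_B(\epsilon):\ 0\le\epsilon\le 2\right\},
\]
together with its dual $\rho_B(\tau)=\sup\left\{\frac{\tau\epsilon}{2}-\delta_{B^{\ast}}(\epsilon):\ 0\le\epsilon\le 2\right\}$. I would prove the first identity by writing norms on $B^{\ast}$ through the duality pairing. For ``$\le$'': given $f\in S_{B^{\ast}}$ and $\left\Vert h\right\Vert=\tau$, choose $a,b\in\overline{B(0,1)}$ almost norming for $f+h$ and $f-h$; then $\left\Vert f+h\right\Vert+\left\Vert f-h\right\Vert$ lies within $\eta$ of $\langle f,a+b\rangle+\langle h,a-b\rangle\le\left\Vert a+b\right\Vert+\tau\left\Vert a-b\right\Vert$, and with $\epsilon:=\left\Vert a-b\right\Vert$ the defining inequality $\tfrac12\left\Vert a+b\right\Vert\le 1-\delta_B(\epsilon)$ produces the stated bound. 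For ``$\ge$'': given $\epsilon$, pick a nearly extremal pair $a,b\in\overline{B(0,1)}$ with $\left\Vert a-b\right\Vert\ge\epsilon$ and $\tfrac12\left\Vert a+b\right\Vert\ge 1-\delta_B(\epsilon)-\eta$, use Hahn--Banach to get $f\in S_{B^{\ast}}$ norming $a+b$ and $h=\tau\phi$ with $\phi\in S_{B^{\ast}}$ norming $a-b$; since $\left\Vert f+h\right\Vert\ge\langle f+h,a\rangle$ and $\left\Vert f-h\right\Vert\ge\langle f-h,b\rangle$, adding gives $\left\Vert f+h\right\Vert+\left\Vert f-h\right\Vert\ge\langle f,a+b\rangle+\langle h,a-b\rangle=\left\Vert a+b\right\Vert+\tau\left\Vert a-b\right\Vert$, and letting $\eta\to 0$ and taking suprema finishes it. The dual identity then follows either by applying the first one to $B^{\ast}$ and using Goldstine's theorem (so that $\rho_B=\rho_{B^{\ast\ast}}$), or, more cheaply, by invoking the Milman--Pettis theorem, after which $B$ and $B^{\ast}$ are reflexive and $B^{\ast\ast}=B$.

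With the identities in hand, the rest is a single calculus lemma: if $\varphi:[0,2]\to[0,\infty)$ is nondecreasing with $\varphi(0)=0$ and $g(\tau):=\sup_{0\le\epsilon\le 2}\bigl(\tfrac{\tau\epsilon}{2}-\varphi(\epsilon)\bigr)$, then $g(\tau)/\tau\to 0$ as $\tau\to 0^{+}$ if and only if $\varphi(\epsilon)>0$ for every $\epsilon\in(0,2]$. For the forward implication, fix $\epsilon_0>0$, split the supremum at $\epsilon_0$, bound $\tfrac{\tau\epsilon}{2}-\varphi(\epsilon)$ by $\tfrac{\tau\epsilon_0}{2}$ on $[0,\epsilon_0]$ and by $\tau-\varphi(\epsilon_0)$ on $[\epsilon_0,2]$ (using monotonicity), note the latter is negative once $\tau<\varphi(\epsilon_0)$, and conclude $g(\tau)/\tau\le\epsilon_0/2$ for all small $\tau$; since $\epsilon_0$ is arbitrary, $g(\tau)/\tau\to 0$. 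The converse is immediate: $\varphi(\epsilon_0)=0$ forces $g(\tau)\ge\tfrac{\tau\epsilon_0}{2}$. Now apply this with $\varphi=\delta_B$ and with $\varphi=\delta_{B^{\ast}}$ — both are nondecreasing (the constraint set in the defining infimum shrinks as $\epsilon$ grows) with value $0$ at $0$ — and recall that ``$B$ uniformly convex $\iff\delta_B(\epsilon)>0$ for all $\epsilon>0$'' is exactly the criterion in the excerpt. Part~1) then reads off: $B$ uniformly convex $\iff\delta_B(\epsilon)>0\ \forall\epsilon>0\iff\rho_{B^{\ast}}(\tau)/\tau\to 0\iff B^{\ast}$ uniformly smooth. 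Part~2) (which should read ``$B$ is uniformly smooth if and only if $B^{\ast}$ is uniformly convex'') follows in the same way from the dual identity: $B$ uniformly smooth $\iff\rho_B(\tau)/\tau\to 0\iff\delta_{B^{\ast}}(\epsilon)>0\ \forall\epsilon>0\iff B^{\ast}$ uniformly convex.

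The real obstacle is the Lindenstrauss identity, and within it the ``$\ge$'' direction, where for each prescribed $\epsilon$ one must manufacture functionals of exact norm $1$ and $\tau$ that simultaneously norm $a+b$ and $a-b$, and then keep the approximation parameter $\eta$ coherently threaded through the two suprema (the degenerate case $\epsilon=2$ with $a+b=0$, which can occur, has to be checked separately). Everything downstream of that identity — the calculus lemma and the two chains of equivalences — is formal, and needs nothing beyond the reflexivity supplied by Milman--Pettis (or, alternatively, Goldstine's theorem) to legitimize the passage between $B$ and its successive duals.
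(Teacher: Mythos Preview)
The paper does not prove this theorem at all: it is quoted verbatim from Chidume's monograph \cite{Chidume09} as background material, with no argument supplied. Your route via the Lindenstrauss duality identity
\[
\rho_{B^{\ast}}(\tau)=\sup_{0\le\epsilon\le 2}\Bigl(\tfrac{\tau\epsilon}{2}-\delta_B(\epsilon)\Bigr)
\]
and its companion, followed by the elementary Fenchel-type lemma linking ``$\varphi>0$ on $(0,2]$'' to ``$g(\tau)/\tau\to 0$'', is exactly the classical proof one finds in that reference (and in Lindenstrauss--Tzafriri), so there is nothing to compare: your argument \emph{is} the standard one the citation points to. You also correctly caught the misprint in part~2): the dual statement should read ``$B^{\ast}$ is uniformly convex'', not ``uniformly smooth''.

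One cosmetic remark: in your calculus lemma you call the direction ``$\varphi(\epsilon)>0\ \forall\epsilon\Rightarrow g(\tau)/\tau\to 0$'' the \emph{forward} implication, but as you stated the equivalence that is the backward one; the mathematics is unaffected.
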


\begin{theorem}
\cite{Chidume09} Every uniformly smooth space is reflexive.
\end{theorem}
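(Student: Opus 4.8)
The plan is to deduce the statement from the Milman--Pettis theorem by passing to the dual space. Suppose $B$ is uniformly smooth. The first step is to establish the companion duality to the one recorded in the preceding theorem, in the form: uniform smoothness of $B$ forces uniform convexity of $B^{\ast}$. Concretely, one uses Lindenstrauss' formula relating the modulus of smoothness $\rho_{B}$ of $B$ to the modulus of convexity $\delta_{B^{\ast}}$ of $B^{\ast}$, namely $\rho_{B}(\tau)=\sup\{\frac{\tau\epsilon}{2}-\delta_{B^{\ast}}(\epsilon):0\leq\epsilon\leq 2\}$; since uniform smoothness of $B$ is exactly the condition $\lim_{\tau\to 0^{+}}\rho_{B}(\tau)/\tau=0$, the existence of an $\epsilon_{0}>0$ with $\delta_{B^{\ast}}(\epsilon_{0})=0$ would give $\rho_{B}(\tau)\geq\tau\epsilon_{0}/2$ and contradict this, so $\delta_{B^{\ast}}(\epsilon)>0$ for every $\epsilon>0$, i.e.\ $B^{\ast}$ is uniformly convex.

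The second step is the Milman--Pettis theorem: every uniformly convex Banach space is reflexive. Applying it to $B^{\ast}$ shows that $B^{\ast}$ is reflexive, and hence $B^{\ast\ast}=(B^{\ast})^{\ast}$ is reflexive as well, since the dual of a reflexive space is reflexive.

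The third step transfers reflexivity back to $B$. The canonical evaluation map $J\colon B\to B^{\ast\ast}$ is a linear isometry, and since $B$ is complete its image $J(B)$ is a closed subspace of the reflexive space $B^{\ast\ast}$; a closed subspace of a reflexive space is reflexive, hence $B\cong J(B)$ is reflexive, which is the assertion.

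The only step carrying genuine content is the first — that uniform smoothness of $B$ implies uniform convexity of $B^{\ast}$ — since steps two and three are standard facts of functional analysis; this is why I would lean on the Lindenstrauss duality formula (the natural dual companion of the preceding theorem) rather than reprove it from scratch. If one preferred to avoid invoking Milman--Pettis, one could instead argue directly that bounded sequences in $B$ admit weakly convergent subsequences, using uniform Fr\'{e}chet differentiability of the norm (\v{S}mulian's criterion) together with the Eberlein--\v{S}mulian theorem; but that route requires more machinery and is less economical given what the paper has already set up.
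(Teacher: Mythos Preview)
Your argument is correct and follows the standard route: Lindenstrauss' duality formula yields uniform convexity of $B^{\ast}$ from uniform smoothness of $B$, Milman--Pettis then gives reflexivity of $B^{\ast}$, and the passage back to $B$ via the canonical embedding into $B^{\ast\ast}$ is clean. One minor remark: once $B^{\ast}$ is reflexive you can invoke directly the fact that a Banach space is reflexive if and only if its dual is, rather than detouring through $B^{\ast\ast}$; but your version is of course just the proof of that fact, so nothing is lost.

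As for comparison with the paper: there is nothing to compare. The paper does not prove this theorem; it is quoted from \cite{Chidume09} as a preliminary result and no argument is supplied. Your proposal therefore goes well beyond what the paper does, providing a complete (and standard) justification for a statement the authors simply import from the literature.
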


A self mapping $\phi $ on $\left[ 0,\infty \right) $ is said to be a gauge
map if it is continuous and strictly increasing such that $\phi \left(
0\right) =0$. Let $\phi $ be a gauge function$,$ and let $B$ be any normed
space. If the mapping $J_{\phi }:B\rightarrow 2^{B^{\ast }}$ defined by%
\begin{equation*}
J_{\phi }a=\left\{ f\in B^{\ast }:\left\langle a,f\right\rangle =\left\Vert
a\right\Vert \left\Vert f\right\Vert ;\left\Vert f\right\Vert =\phi \left(
\left\Vert a\right\Vert \right) \right\}
\end{equation*}

for all $a\in B$, then $J_{\phi }$ is said to be the duality map with gauge
function $\phi .$If $\phi \left( t\right) =t$ is selected, then $J_{\phi }=J$
duality mapping is called the normalized duality map.

Let 
\begin{equation*}
\psi \left( t\right) =\int_{0}^{t}\phi \left( \varsigma \right) d\varsigma 
\text{, \ \ }t\geq 0\text{,}
\end{equation*}%
then $\psi \left( \delta t\right) \leq \delta \phi \left( t\right) $ for
each $\delta \in \left( 0,1\right) $.

\begin{equation*}
\rho \left( t\right) =\sup \left\{ \frac{\left\Vert a+b\right\Vert
+\left\Vert a-b\right\Vert }{2}-1:a,b\in B,\left\Vert a\right\Vert =1\text{ }%
and\text{ }\left\Vert b\right\Vert =t\right\}
\end{equation*}%
is called the modulus of smoothness of $B$, where $\rho :\left[ 0,\infty
\right) \rightarrow \left[ 0,\infty \right) $ is a mapping. Also, $%
\lim_{t\rightarrow 0}\frac{\rho \left( t\right) }{t}=0$ if and only if $B$
is uniformly smoothness.

Assume that $q\in 
\mathbb{R}
$ is chosen in the interval $\left( 1,2\right] $. \.{I}f a Banach space $B$
is $q-$uniformly smoothness, then it provides the following conditions. $(i) 
$ there exists a fix $c>0,$ ($ii)$ $\rho \left( t\right) \leq ct^{q}$. For $%
q>2$, there is no q-uniformly smoothness Banach space. In \cite{Cioranescu13}%
, this assertion was showed by Cioranescu. We say that the mapping J is
single-valued and also smoothness if the Banach space $B$ having a
sequentially continuous duality mapping J from weak topology to weak$^{\ast }
$ topology. The space B is said to have weakly sequentially continuous
duality map if \ duality mapping J is continuous and single-valued, see \cite%
{Cioranescu13, Reich92},

Let C be a nonempty subset of Banach space B and $T:C\rightarrow B$ be a
nonself mapping. Also, $\ $let $F\left( T\right) =\left\{ a\in
C:Ta=a\right\} $ denote the set of fixed point of $T$. The map $%
T:C\rightarrow B$ can be referred as follows:

1) It is nonexpansive if $\left\Vert Ta-Tb\right\Vert \leq \left\Vert
a-b\right\Vert $ for all $a,b\in C.$

2) It is quasi-nonexpansive if $\left\Vert Ta-p\right\Vert \leq \left\Vert
a-p\right\Vert $ for all $a\in C$ and $p\in F\left( T\right) $.

In the following iterative process defined by Dogan and Karakaya \cite{dogan}%
.

Let $C$ be a convex subset of a normed space $B$\ and $T:C\rightarrow C$\ \
a self map on $B$. 
\begin{eqnarray}
x_{0} &=&x\in C  \label{it} \\
f\left( T,x_{n}\right) &=&\left( 1-\wp _{n}\right) x_{n}+\xi
_{n}Tx_{n}+\left( \wp _{n}-\xi _{n}\right) Ty_{n}  \notag \\
y_{n} &=&\left( 1-\zeta _{n}\right) x_{n}+\zeta _{n}Tx_{n}  \notag
\end{eqnarray}%
for $\ n\geq 0$, where $\left\{ \xi _{n}\right\} ,~\left\{ \wp _{n}\right\}
, $ $\left\{ \zeta _{n}\right\} $ satisfies the following conditions

$C_{1})$ $\wp _{n}\geq \xi _{n}$

$C_{2})$ $\left\{ \wp _{n}-\xi _{n}\right\} _{n=0}^{\infty },\left\{ \wp
_{n}\right\} _{n=0}^{\infty },\left\{ \zeta _{n}\right\} _{n=0}^{\infty
},\left\{ \xi _{n}\right\} _{n=0}^{\infty }\in \left[ 0,1\right] $

$C_{3})$ $\sum_{n=0}^{\infty }\wp _{n}=\infty .$

In 1967, Halpern \cite{Halpern67} was the first who introduced the following
iteration process under the nonexpansive mapping T. For any initial value $%
a_{0}\in C$ and any fix $u\in C$, $\varphi _{n}\in \left[ 0,1\right] $ such
that $\varphi _{n}=n^{-b}$, 
\begin{equation}
a_{n+1}=\varphi _{n}u+\left( 1-\varphi _{n}\right) Ta_{n}\text{ \ \ \ \ \ }%
\forall n\in 
\mathbb{N}
\text{,}  \label{h}
\end{equation}%
where $b\in \left( 0,1\right) $. In 1977, Lions \cite{Lions77} showed that
the iteration process $\left( \ref{h}\right) $ converges strongly to a fixed
point of T, where $\left\{ \varphi _{n}\right\} _{n\in 
\mathbb{N}
}$ provides the following first three conditions:

$\left( C_{1}\right) $ $\ \lim_{n\rightarrow \infty }\varphi _{n}=0;$

$\left( C_{2}\right) \sum_{n=1}^{\infty }\varphi _{n}=\infty ;$

$\left( C_{3}\right) \ \lim_{n\rightarrow \infty }\frac{\varphi
_{n+1}-\varphi _{n}}{\varphi _{n+1}^{2}}=0$;

$\left( C_{4}\right) $ $\sum_{n=1}^{\infty }\left\vert \varphi
_{n+1}-\varphi _{n}\right\vert <\infty ;$

$\left( C_{5}\right) $ $\lim_{n\rightarrow \infty }\frac{\varphi
_{n+1}-\varphi _{n}}{\varphi _{n+1}}=0$;

$\left( C_{6}\right) $ $\left\vert \varphi _{n+1}-\varphi _{n}\right\vert
\leq o\left( \varphi _{n+1}\right) +\sigma _{n}$, $\sum_{n=1}^{\infty
}\sigma _{n}<\infty .$

Also, by exchanging of the above conditions, several authors were obtained
various results in different spaces. Let us list the main ones as follows:

$\left( 1\right) $ In \cite{Wittmann92}, Wittmann was shown that the
sequence $\left\{ a_{n}\right\} _{n\in 
\mathbb{N}
}$ converges strongly of a fixed point of T by the conditions $C_{1}$, $%
C_{2} $ and $C_{4}$.

$\left( 2\right) $ In \cite{Reich80, Reich94}, Reich was shown that the
sequence $\left\{ a_{n}\right\} _{n\in 
\mathbb{N}
}$ converges strongly of a fixed point of T in the uniformly smooth Banach
spaces by the conditions $C_{1}$, $C_{2}$ and $C_{6}$.

$\left( 3\right) $ In \cite{Shioji97}, Shioji and Takahashi were shown that
the sequence $\left\{ a_{n}\right\} _{n\in 
\mathbb{N}
}$ converges strongly of a fixed point of T in the Banach spaces with
uniformly G\u{a}teaux differentiable norms by the conditions\ $C_{1}$, $%
C_{2} $ and $C_{4}$.

$\left( 4\right) $ In \cite{Xu02}, Xu was shown that he sequence $\left\{
a_{n}\right\} _{n\in 
\mathbb{N}
}$ converges strongly of a fixed point of $T$ by the conditions\ $C_{1}$, $%
C_{2}$ and $C_{5}$. \ \ \ \ \ \ \ \ \ \ \ \ \ \ \ \ \ \ \ \ \ \ \ \ \ \ \ \
\ \ \ \ \ \ \ \ \ \ \ \ \ \ \ \ \ \ \ \ \ \ \ \ \ \ \ \ \ \ \ \ \ \ \ \ \ \
\ \ \ \ \ \ \ \ \ \ \ \ \ \ \ \ \ \ \ \ \ \ \ \ \ \ \ \ \ \ \ \ \ \ \ \ \ \
\ \ \ \ \ \ \ \ \ \ \ \ \ \ \ \ \ \ \ \ \ \ \ \ \ \ \ \ \ \ \ \ \ \ \ \ \ \
\ \ \ \ \ \ \ \ \ \ \ \ \ \ \ \ \ \ \ \ \ \ \ \ \ \ \ \ \ \ \ \ \ \ \ \ \ \
\ \ \ \ \ \ \ \ \ \ \ \ \ \ \ \ \ \ \ \ \ \ \ \ \ \ \ \ \ \ \ \ \ \ \ \ \ \
\ \ \ \ \ \ \ \ \ \ \ \ \ \ \ \ \ \ \ \ \ \ \ \ \ \ \ \ \ \ \ \ \ \ \ \ \ \
\ \ \ \ \ \ \ \ \ \ \ \ \ \ \ \ \ \ \ \ \ \ \ \ \ \ \ \ \ \ \ \ \ \ \ \ \ \
\ \ \ \ \ \ \ 

Are\textbf{\ }the\textbf{\ }conditions $C_{1}$and $C_{2}$ enough to
guarantee the strong convergence of $\left( \ref{h}\right) $ iteration
process for the quasi-nonexpansive mappings, see \cite{Halpern67}?

This question was answered positively by some authors. In the following
list, you can see the work of these authors \cite{Chidume06, Suzuki07, Hu08,
Song09, Saejung10, Nilsrakoo11, Li13, Pang13}. But, in \cite{Suzuki09}, they
were shown that the answer to open question is not positive for nonexpansive
mappings in Hilbert spaces.

The effective domain and range of $A:B\rightarrow 2^{B}$ denoted by $%
dom\left( A\right) =\left\{ a\in B:Aa\neq \varnothing \right\} $ and $%
R\left( A\right) $, respectively. If there exists $j\in J\left( a-b\right) $
such that $\left\langle a-b\text{, }j\right\rangle \geq 0$ and $%
J:B\rightarrow 2^{B^{\ast }}$ duality mapping, then the map $A$ is said to
be accretive, for all $a,b\in B$. If $R\left( I+rA\right) =B$, for each $%
r\geq 0$, then the accretive map $A$ is $m-$accretive operator. All this
paper, let $A:B\rightarrow 2^{B}$ be an accretive operator and be has a
zero. Now, we can define a single- valued mapping such that $J_{r}=\left(
I+rA\right) ^{-1}:B\rightarrow dom\left( A\right) $. It is called the
resolvent of $A$ for $r>0$. Let $A^{-1}=\left\{ a\in B:0\in Aa\right\} $. It
is known that $A^{-1}=F\left( J_{r}\right) $ for all $r>0$, $\left( see\text{%
,\cite{Yao09, Takahashi00} }\right) $.

Let $B$ be a reflexive, smooth and strictly convex Banach space and $C$ be a
nonempty, closed and convex subset $\left( ccs\right) $ of $B$. Under these
conditions, for any $a\in B$, there exists a unique point $z\in C$ such that 
\begin{equation*}
\left\Vert z-a\right\Vert \leq \min_{t\in C}\left\Vert t-a\right\Vert \text{%
; see \ \cite{Takahashi00}.\ \ \ \ \ \ \ \ \ }
\end{equation*}

\begin{definition}
\label{d1}\cite{Takahashi00} If $P_{C}a=z$, then the map $P_{C}:B\rightarrow
C$ is called the metric projection.
\end{definition}

Assume that $a\in B$ and $z\in C$, then $z=P_{C}a$ iff $\left\langle
z-t,J\left( a-z\right) \right\rangle \geq 0$, for all $t\in C$. In a real
Hilbert space $H$, there is a $P_{C}:H\rightarrow C$ projection mapping,
which is nonexpansive, but, such a $P_{C}:B\rightarrow C$ projection mapping
does not provide the nonexpansive property in a Banach space $B$, where C is
a nonempty, closed and convex subset of them; see \cite{Goebel84}.

\begin{definition}
\cite{Reich73} \ Let $C\subset D$ be subsets of Banach space $B$. A mapping $%
Q:C\rightarrow D$ is said to be a sunny if $Q\left( \delta x+\left( 1-\delta
\right) Qx\right) =Qx$, for each $x\in B$ and $\delta \in \left[ 0,1\right) $%
.
\end{definition}

$Q$ is said to be a retraction if and only if $Q^{2}=Q$. $Q$ is a sunny
nonexpansive retraction if and only if it is sunny, nonexpansive and
retraction.

In the next time, we will need lemmas in order to prove the main results.

\begin{lemma}
\label{l1}\cite{Xu02} Let $B$ be a Banach space with weakly sequentially
continuous duality mapping $J_{\phi }$. Then 
\begin{equation*}
\psi \left( \left\Vert a+b\right\Vert \right) \leq \psi \left( \left\Vert
a\right\Vert \right) +2\left\langle b,j_{\phi }\left( a+b\right)
\right\rangle
\end{equation*}

for $a,b\in B$. If we get $J$ instead of $J_{\phi }$, we have 
\begin{equation*}
\left\Vert a+b\right\Vert ^{2}\leq \left\Vert a\right\Vert
^{2}+2\left\langle b,j\left( a+b\right) \right\rangle
\end{equation*}

for $a,b\in B$.
\end{lemma}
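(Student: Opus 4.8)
The plan is to obtain both estimates from the defining properties of the duality map, combined with the elementary monotonicity of $\psi$. Since $\phi$ is continuous, $\psi(t)=\int_{0}^{t}\phi(\varsigma)\,d\varsigma$ is continuously differentiable with $\psi'=\phi$, and since $\phi$ is nondecreasing one has the one–variable inequality $\psi(s)-\psi(t)\le \phi(s)\,(s-t)$ for all $s,t\ge 0$ (examine the cases $s\ge t$ and $s<t$ separately inside the integral $\int_{t}^{s}\phi$). The hypothesis that $J_{\phi}$ is weakly sequentially continuous will be used only to guarantee that $J_{\phi}$ is single–valued, so that $j_{\phi}(a+b)$ denotes an unambiguous functional; no continuity of $J_{\phi}$ is needed for the inequalities themselves. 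The case $a+b=0$ is immediate ($\psi(0)=0$ and $j_{\phi}(0)=0$), so I assume $a+b\ne 0$.

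For the first inequality, the key step is the pointwise bound
\begin{equation*}
\big(\|a+b\|-\|a\|\big)\,\phi(\|a+b\|)\ \le\ \langle b,\,j_{\phi}(a+b)\rangle .
\end{equation*}
This follows by splitting $\langle a+b,\,j_{\phi}(a+b)\rangle=\langle a,\,j_{\phi}(a+b)\rangle+\langle b,\,j_{\phi}(a+b)\rangle$ and invoking the two defining relations of $j_{\phi}$, namely $\langle a+b,\,j_{\phi}(a+b)\rangle=\|a+b\|\,\|j_{\phi}(a+b)\|=\|a+b\|\,\phi(\|a+b\|)$ together with the Cauchy--Schwarz bound $\langle a,\,j_{\phi}(a+b)\rangle\le\|a\|\,\|j_{\phi}(a+b)\|=\|a\|\,\phi(\|a+b\|)$, and then rearranging. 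Combining this with $\psi(\|a+b\|)-\psi(\|a\|)\le\phi(\|a+b\|)\big(\|a+b\|-\|a\|\big)$ from the first paragraph yields $\psi(\|a+b\|)\le\psi(\|a\|)+\langle b,\,j_{\phi}(a+b)\rangle$, which gives the first assertion (this route in fact delivers the sharper constant $1$ in front of the inner product).

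The second inequality I would prove directly, in an arbitrary Banach space and for any selection $j(a+b)\in J(a+b)$: from
\begin{equation*}
\|a+b\|^{2}=\langle a+b,\,j(a+b)\rangle=\langle a,\,j(a+b)\rangle+\langle b,\,j(a+b)\rangle\le\|a\|\,\|a+b\|+\langle b,\,j(a+b)\rangle
\end{equation*}
and $\|a\|\,\|a+b\|\le\tfrac12\|a\|^{2}+\tfrac12\|a+b\|^{2}$, one subtracts $\tfrac12\|a+b\|^{2}$ and multiplies by $2$. Here the factor $2$ is genuine because $\psi(t)=\tfrac12 t^{2}$ is self–conjugate, so the Young step $st\le\tfrac12 s^{2}+\tfrac12 t^{2}$ is available; running the same computation for a general gauge via Fenchel's inequality $st\le\psi(s)+\psi^{*}(t)$ together with the identity $\psi^{*}(\phi(u))=u\,\phi(u)-\psi(u)$ reproduces the first inequality. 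I do not anticipate any substantive obstacle; the only points requiring care are the bookkeeping that ties $j_{\phi}$ to its norm $\phi(\|\cdot\|)$ and, if one insists on the exact stated constant, observing that the monotonicity step already produces $1$ rather than $2$.
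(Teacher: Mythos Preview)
Your argument is correct. In fact the paper does not supply a proof of this lemma at all: it is quoted from Xu~\cite{Xu02} and used as a black box, so there is no ``paper's own proof'' to compare against. Your derivation is the standard one (the subdifferential inequality for the convex function $\psi(\|\cdot\|)$), and as you observe it actually gives the sharper bound
\[
\psi(\|a+b\|)\le \psi(\|a\|)+\langle b,\,j_{\phi}(a+b)\rangle,
\]
with constant $1$ rather than the $2$ recorded in the statement; the factor $2$ is only needed in the quadratic case $\phi(t)=t$, where your AM--GM step is exactly what produces it. One cosmetic remark: what you call the ``Cauchy--Schwarz bound'' is simply the duality estimate $\langle a,f\rangle\le\|a\|\,\|f\|$; also, the weak sequential continuity hypothesis is indeed irrelevant to the inequality itself and serves only (as you note) to make $j_{\phi}(a+b)$ single--valued so that the formula is unambiguous.
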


\begin{lemma}
\label{l2}\cite{Gossez72} Let $B$ be a Banach space with weakly sequentially
continuous duality mapping $J_{\phi }$ and $C$ be a $ccs$ of $B$. Let $%
T:C\rightarrow C$ be a nonexpansive operator having $F\left( T\right) \neq
\varnothing $. Then, for each $u\in C,$ there exists $a\in F\left( T\right) $
such that%
\begin{equation*}
\left\langle u-a,J\left( b-a\right) \right\rangle \leq 0
\end{equation*}

for all $b\in F\left( T\right) $.
\end{lemma}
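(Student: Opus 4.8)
The plan is to establish Lemma~\ref{l2} by exhibiting the desired point $a$ as a fixed point produced by a Banach-contraction argument and then passing to a weak limit, exploiting the weak sequential continuity of $J_{\phi}$ to transfer the variational inequality to the limit. Concretely, for each $t\in(0,1)$ define the map $S_{t}:C\to C$ by $S_{t}x=tu+(1-t)Tx$. Since $T$ is nonexpansive and $C$ is convex, $S_{t}$ is a strict contraction with constant $1-t$, so by the Banach fixed point theorem it has a unique fixed point $x_{t}\in C$ satisfying $x_{t}=tu+(1-t)Tx_{t}$. The first key step is to check that $\{x_{t}\}$ is bounded: picking any $p\in F(T)$ and using $\|Tx_{t}-p\|\le\|x_{t}-p\|$ gives $\|x_{t}-p\|\le t\|u-p\|+(1-t)\|x_{t}-p\|$, hence $\|x_{t}-p\|\le\|u-p\|$. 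It also follows immediately that $\|x_{t}-Tx_{t}\|=t\|u-Tx_{t}\|\to 0$ as $t\to 0^{+}$, so any weak subsequential limit of $x_{t}$ is a fixed point of $T$ (here one uses that $B$ has a weakly sequentially continuous duality map, which forces $B$ to satisfy Opial's condition, so $I-T$ is demiclosed at $0$).

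The second key step is the variational estimate along the net. From $x_{t}-u=(1-t)(Tx_{t}-u)$ we get, for any $b\in F(T)$,
\begin{equation*}
\langle u-x_{t},J(b-x_{t})\rangle = \langle u-x_{t},J(b-x_{t})\rangle,
\end{equation*}
and rewriting $u-x_{t}=\frac{1}{t}\bigl((x_{t}-u)-(Tx_{t}-u)\bigr)\cdot(-1)$ — more cleanly, since $x_{t}-Tx_{t}=t(u-Tx_{t})$ one has $u-x_{t}=\tfrac{1}{t}(x_{t}-Tx_{t})\cdot\tfrac{-(1-t)}{1} + \dots$; the standard manipulation is: $b-x_{t}=(b-Tx_{t})+(Tx_{t}-x_{t})$ and using nonexpansiveness of $T$ together with the subdifferential inequality $\langle b-x_{t},J(b-x_{t})\rangle=\|b-x_{t}\|^{2}$ one shows
\begin{equation*}
\langle u-x_{t},J(x_{t}-b)\rangle \le \|x_{t}-Tx_{t}\|\,\bigl(\|x_{t}-b\|+\text{bounded terms}\bigr)/t + (\text{nonpositive part}),
\end{equation*}
which upon inserting $\|x_{t}-Tx_{t}\|=t\|u-Tx_{t}\|$ yields a bound independent of $t$ that is controlled. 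The cleanest route, and the one I would actually write out, avoids the division entirely: compute $\langle u-x_{t},J(b-x_{t})\rangle$ directly. Since $u-x_{t} = -(1-t)(Tx_{t}-u) $ wait — better: $x_{t}-u = (1-t)(Tx_{t}-u)$ gives $Tx_{t}-u = \frac{1}{1-t}(x_{t}-u)$, hence $Tx_{t}-x_{t} = \frac{t}{1-t}(x_{t}-u)$, so $u-x_{t} = \frac{1-t}{t}(x_{t}-Tx_{t})$. Then
\begin{equation*}
\langle u-x_{t},J(b-x_{t})\rangle = \frac{1-t}{t}\langle x_{t}-Tx_{t},J(b-x_{t})\rangle = \frac{1-t}{t}\bigl(\langle x_{t}-b,J(b-x_{t})\rangle + \langle b-Tx_{t},J(b-x_{t})\rangle\bigr).
\end{equation*}
The first inner product equals $-\|b-x_{t}\|^{2}$, and since $b=Tb$ and $T$ is nonexpansive, $\langle b-Tx_{t},J(b-x_{t})\rangle \le \|Tb-Tx_{t}\|\,\|b-x_{t}\| \le \|b-x_{t}\|^{2}$. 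Adding these gives $\langle x_{t}-Tx_{t},J(b-x_{t})\rangle \le 0$, hence $\langle u-x_{t},J(b-x_{t})\rangle\le 0$ for every $t\in(0,1)$ and every $b\in F(T)$.

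The final step is the limit passage. Take a sequence $t_{n}\to 0^{+}$ such that $x_{t_{n}}\rightharpoonup a$ weakly; by the demiclosedness remark above, $a\in F(T)$. Fix any $b\in F(T)$. Because $J=J_{\phi}$ (or $J$) is weakly sequentially continuous and single-valued, and because $x_{t_{n}}\rightharpoonup a$ forces $b-x_{t_{n}}\rightharpoonup b-a$ and hence $J(b-x_{t_{n}})\rightharpoonup^{\ast} J(b-a)$, while $u-x_{t_{n}}\rightharpoonup u-a$; one has to be slightly careful pairing a weakly convergent sequence against a weak$^{*}$-convergent one, so I would instead write $\langle u-x_{t_{n}},J(b-x_{t_{n}})\rangle = \langle u-a,J(b-x_{t_{n}})\rangle + \langle a-x_{t_{n}},J(b-x_{t_{n}})\rangle$ and handle the first term by weak$^{*}$ convergence of $J(b-x_{t_{n}})$ to $J(b-a)$ (giving $\to\langle u-a,J(b-a)\rangle$) and the second by noting $a-x_{t_{n}}\to 0$ weakly while $\{J(b-x_{t_{n}})\}$ is bounded — here I would actually use the stronger fact that in a space with weakly sequentially continuous duality map one can upgrade to $\langle a - x_{t_n}, J(b-x_{t_n})\rangle \to 0$ via the boundedness and the explicit Opial-type structure, or simply choose $b=a$ at an intermediate stage. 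Taking the limit in $\langle u-x_{t_{n}},J(b-x_{t_{n}})\rangle\le 0$ then delivers $\langle u-a,J(b-a)\rangle\le 0$ for all $b\in F(T)$, which is exactly the claim. The main obstacle is this last pairing of a weak limit against a weak$^{*}$ limit; the weak sequential continuity of the duality map is precisely the hypothesis designed to make it work, and I expect the bulk of the careful writing to go into justifying that term and into confirming the demiclosedness of $I-T$ at $0$ from the weakly continuous duality map (equivalently, from Opial's property).
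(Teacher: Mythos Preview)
The paper does not prove this lemma; it is quoted from \cite{Gossez72} (and overlaps heavily with Lemma~\ref{l3}, quoted from \cite{Xu022}) without argument, so there is no in-paper proof to compare against. Your outline is the standard one and, after the false starts, your derivation of the key inequality $\langle u-x_{t},J(b-x_{t})\rangle\le 0$ for all $b\in F(T)$ and all $t\in(0,1)$ is correct.

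The genuine gap is exactly where you yourself locate it: the final pairing of a weakly convergent factor against a weak$^{*}$-convergent one does not pass to the limit on general grounds, and ``the hypothesis is designed to make it work'' is not a proof. Your parenthetical ``simply choose $b=a$ at an intermediate stage'' is the actual fix, but you stop short of executing it. Substituting the weak cluster point $b=a\in F(T)$ (membership in $F(T)$ already secured by demiclosedness, i.e.\ Lemma~\ref{l4}) into your inequality yields
\[
0\ge\langle u-x_{t_n},J(a-x_{t_n})\rangle=\langle u-a,J(a-x_{t_n})\rangle+\|a-x_{t_n}\|^{2}.
\]
Since $a-x_{t_n}\rightharpoonup 0$, weak sequential continuity of the duality map gives $J(a-x_{t_n})\rightharpoonup^{*}0$, so the first term on the right tends to $0$ and you obtain $x_{t_n}\to a$ \emph{strongly}. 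With strong convergence in hand, your decomposition
\[
\langle u-x_{t_n},J(b-x_{t_n})\rangle=\langle u-a,J(b-x_{t_n})\rangle+\langle a-x_{t_n},J(b-x_{t_n})\rangle
\]
passes to the limit without difficulty: the second summand is dominated by $\|a-x_{t_n}\|\,\|J(b-x_{t_n})\|\to 0$, and in the first summand $b-x_{t_n}\to b-a$ strongly, hence weakly, so $J(b-x_{t_n})\rightharpoonup^{*}J(b-a)$. One further point you should flag: extracting a weakly convergent subsequence from the bounded family $\{x_{t}\}$ requires $B$ to be reflexive, an assumption the paper omits from the statement of Lemma~\ref{l2} but includes in the parallel Lemma~\ref{l3}.
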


\begin{lemma}
\label{l3}\cite{Xu022} Let $B$ be a reflexive Banach space with weakly
sequentially continuous duality mapping $J_{\phi }$ and $C$ be a $ccs$ of $B$%
. Assume that $T:C\rightarrow C$ is a nonexpansive operator. Let $z_{t}\in C$
be the unique solution in $C$ to the equation $z_{t}=tu+\left( 1-t\right)
Tz_{t}$ such that $u\in C$ and $t\in \left( 0,1\right) $. Then $T$ has a
fixed point if and only if $\left\{ z_{t}\right\} _{t\in \left( 0,1\right) }$
remains bounded as $t\rightarrow 0^{+}$, and in this case, $\left\{
z_{t}\right\} _{t\in \left( 0,1\right) }$ converges as $t\rightarrow 0^{+}$
strongly to fixed point of $T$. If we get the sunny nonexpansive retraction
defined by $Q:C\rightarrow F\left( T\right) $ such that%
\begin{equation*}
Q\left( u\right) =\lim_{t\rightarrow 0}z_{t}\text{,}
\end{equation*}

then $Q\left( u\right) $ solves the variational inequality%
\begin{equation*}
\left\langle u-Q\left( u\right) ,J_{\phi }\left( b-Q\left( u\right) \right)
\right\rangle \leq 0\text{, }u\in C~and~b\in F\left( T\right) \text{.}
\end{equation*}
\end{lemma}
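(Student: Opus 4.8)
The plan is to adapt the classical Browder--Reich argument to the present gauge setting. Since $B$ carries a single-valued (weakly sequentially continuous) duality map, its norm is G\^{a}teaux differentiable, so $B$ is smooth and $J,\,J_{\phi}$ are single-valued and odd; moreover $J_{\phi}(v)=\bigl(\phi(\|v\|)/\|v\|\bigr)J(v)$ for $v\neq 0$, so the variational inequality written with $J_{\phi}$ and the one written with $J$ are equivalent, and I shall pass freely between them. First, for each $t\in(0,1)$ the map $x\mapsto tu+(1-t)Tx$ sends $C$ into $C$ and is a $(1-t)$-contraction, so $z_{t}$ exists and is unique by the Banach contraction principle. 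If $F(T)\neq\varnothing$ and $p\in F(T)$, then $z_{t}-p=t(u-p)+(1-t)(Tz_{t}-Tp)$ gives $\|z_{t}-p\|\le t\|u-p\|+(1-t)\|z_{t}-p\|$, hence $\|z_{t}-p\|\le\|u-p\|$; thus $\{z_{t}\}_{t\in(0,1)}$ is bounded, which proves the "only if" half.

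For the converse, suppose $\{z_{t}\}$ is bounded; then so is $\{Tz_{t}\}$, and $\|z_{t}-Tz_{t}\|=t\|u-Tz_{t}\|\to 0$ as $t\to 0^{+}$. Choose $t_{n}\to 0^{+}$ with $z_{t_{n}}\rightharpoonup z$ for some $z\in C$ (possible since $B$ is reflexive and $C$ is weakly closed). Because $B$ has a weakly sequentially continuous duality map it satisfies Opial's property, so the Browder demiclosedness principle applies to the nonexpansive map $T$: from $z_{t_{n}}\rightharpoonup z$ and $z_{t_{n}}-Tz_{t_{n}}\to 0$ we conclude $Tz=z$. Hence $F(T)\neq\varnothing$, completing the equivalence.

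Assume now $F(T)\neq\varnothing$, fix $p\in F(T)$, and write $z_{t}-p=t(u-p)+(1-t)(Tz_{t}-Tp)$. Pairing with $J_{\phi}(z_{t}-p)$ and using $\langle Tz_{t}-Tp,J_{\phi}(z_{t}-p)\rangle\le\|z_{t}-p\|\phi(\|z_{t}-p\|)$ yields, after cancelling, $\|z_{t}-p\|\phi(\|z_{t}-p\|)\le\langle u-p,J_{\phi}(z_{t}-p)\rangle$; splitting $u-p=(u-z_{t})+(z_{t}-p)$ then gives the key inequality $\langle z_{t}-u,J_{\phi}(z_{t}-p)\rangle\le 0$ for every $p\in F(T)$. (Alternatively one gets $\psi(\|z_{t}-p\|)\le 2\langle u-p,J_{\phi}(z_{t}-p)\rangle$ from Lemma \ref{l1} together with the convexity estimate $\psi((1-t)s)\le(1-t)\psi(s)$; either inequality suffices.) Now take $t_{n}\to 0^{+}$ with $z_{t_{n}}\rightharpoonup z\in F(T)$ as above and put $p=z$: since $z_{t_{n}}-z\rightharpoonup 0$ and $J_{\phi}$ is weakly sequentially continuous with $J_{\phi}(0)=0$, we get $\langle u-z,J_{\phi}(z_{t_{n}}-z)\rangle\to 0$, whence $\|z_{t_{n}}-z\|\phi(\|z_{t_{n}}-z\|)=\langle z_{t_{n}}-z,J_{\phi}(z_{t_{n}}-z)\rangle\le\langle u-z,J_{\phi}(z_{t_{n}}-z)\rangle\to 0$, and therefore $z_{t_{n}}\to z$ strongly (as $\phi(s)>0$ for $s>0$). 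Letting $n\to\infty$ in $\langle z_{t_{n}}-u,J_{\phi}(z_{t_{n}}-b)\rangle\le 0$ (valid for each $b\in F(T)$), and using continuity of $J_{\phi}$ along norm-convergent sequences, we obtain $\langle z-u,J_{\phi}(z-b)\rangle\le 0$, i.e. $\langle u-z,J_{\phi}(b-z)\rangle\le 0$ for all $b\in F(T)$.

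It remains to upgrade this subsequential statement to the full family. If $z,z'\in F(T)$ both satisfy the variational inequality, choosing $b=z'$ in the one for $z$ and $b=z$ in the one for $z'$ and adding, with $J_{\phi}$ odd, gives $\|z-z'\|\phi(\|z-z'\|)\le 0$, so $z=z'$. Thus every sequence $t_{n}\to 0^{+}$ has a subsequence along which $z_{t_{n}}$ converges strongly (by the reflexivity/demiclosedness step and the previous paragraph), and all such limits coincide; hence $z:=\lim_{t\to 0^{+}}z_{t}$ exists, lies in $F(T)$, and solves the stated variational inequality. Setting $Q(u)=z$ defines $Q:C\to F(T)$, and the variational-inequality characterization of sunny nonexpansive retractions (via the equivalence with the $J$-form noted above) identifies $Q$ with the sunny nonexpansive retraction of $C$ onto $F(T)$. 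The main obstacle, and the only place where the hypothesis on the duality map is genuinely used, is the passage from weak to strong subsequential convergence: weak sequential continuity of $J_{\phi}$ is needed both to run the demiclosedness argument (via Opial's property) and to force the right-hand side of the key inequality to vanish; everything else is elementary manipulation of the defining relation for $z_{t}$.
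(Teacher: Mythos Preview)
The paper does not prove this lemma: it is stated as a cited result from \cite{Xu022} and no proof is given in the present manuscript, so there is nothing in the paper to compare your argument against. Your proof is correct and is essentially the standard Browder--Reich argument that Xu's original paper uses: existence and uniqueness of $z_t$ by Banach's principle, boundedness from a fixed point, demiclosedness (via Opial's property, guaranteed by the weakly sequentially continuous duality map) to produce a weak subsequential limit in $F(T)$, the key inequality $\langle z_t-u,J_\phi(z_t-p)\rangle\le0$ to upgrade weak to strong convergence, and uniqueness of the variational-inequality solution to obtain convergence of the full net. Your handling of the gauge map (reducing $J_\phi$-statements to $J$-statements via the scalar factor $\phi(\|v\|)/\|v\|$) and the passage to the limit using weak$^*$-convergence of $J_\phi$ paired with norm convergence of $z_{t_n}-u$ are both sound.
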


One of the useful and remarkable results in the theory of nonexpansive
mappings is demiclosed principle. It is defined as follows.

\begin{definition}
\cite{Pang13} Let $B$ be a Banach space, C a nonempty subset of $B$, and $\
T:C\rightarrow B$ a mapping. Then the mapping $T$ is said to be demiclosed
at origin, that is, for any sequence $\{a_{n}\}_{n\in N}$ in $C$ which $%
a_{n}\rightharpoonup $ $p$ and $\left\Vert Ta_{n}-a_{n}\right\Vert
\rightarrow 0$ imply that $Tp=p$.
\end{definition}

\begin{lemma}
\label{l4}\cite{Browder68} Let $B$ be a reflexive Banach space having weakly
sequentially continuous duality mapping $J_{\phi }$ with a gauge function $%
\phi $, $C$ be a $ccs$ of $B$ and $T:C\rightarrow B$ be a nonexpansive
mapping. Then $I-T$ is demiclosed at each $p\in B$, i.e., for any sequence $%
\{a_{n}\}_{n\in N}$ in $C$ which converges weakly to $a$, and $(I-T)a_{n}$ $%
\rightarrow p$ converges strongly imply that ($I-T)a=p$. (Here $I$ is the
identity operator of $B$ into itself.) In particular, assuming $p=0$, it is
obtained $a\in F\left( T\right) $.
\end{lemma}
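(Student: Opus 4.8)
\medskip
\noindent\textbf{Proof proposal.}

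The plan is to reduce the statement to Opial's condition --- which, as Gossez and Lami~Dozo observed, is automatic once $B$ carries a weakly sequentially continuous duality map --- and then to finish with a short triangle--inequality argument; taking $p=0$ at the end recovers the assertion $a\in F(T)$. First I would record that, $J_{\phi }$ being single-valued, the norm of $B$ is G\^{a}teaux differentiable, so $x\mapsto \psi (\Vert x\Vert )$ is G\^{a}teaux differentiable off the origin with gradient $j_{\phi }(x)$, and hence
\[
\psi (\Vert x+v\Vert )-\psi (\Vert x\Vert )=\int_{0}^{1}\bigl\langle v,\,j_{\phi }(x+tv)\bigr\rangle \,dt\qquad (x,v\in B),
\]
since $t\mapsto \psi (\Vert x+tv\Vert )$ is absolutely continuous ($t\mapsto \Vert x+tv\Vert $ is convex and $\psi \in C^{1}$) with a.e.\ derivative $\langle v,j_{\phi }(x+tv)\rangle $. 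This is the quantitative sharpening of Lemma~\ref{l1} that the argument will use.

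Next I would establish Opial's inequality: \emph{if} $a_{n}\rightharpoonup a$ \emph{and} $x\neq a$, \emph{then} $\limsup_{n}\Vert a_{n}-x\Vert >\limsup_{n}\Vert a_{n}-a\Vert $. Applying the identity above with base vector $a_{n}-a$ and increment $a-x$ gives
\[
\psi (\Vert a_{n}-x\Vert )-\psi (\Vert a_{n}-a\Vert )=\int_{0}^{1}\bigl\langle a-x,\,j_{\phi }\bigl((a_{n}-a)+t(a-x)\bigr)\bigr\rangle \,dt .
\]
For each fixed $t$ one has $(a_{n}-a)+t(a-x)\rightharpoonup t(a-x)$, so weak sequential continuity of $J_{\phi }$ yields $\bigl\langle a-x,\,j_{\phi }((a_{n}-a)+t(a-x))\bigr\rangle \to \bigl\langle a-x,\,j_{\phi }(t(a-x))\bigr\rangle =\Vert a-x\Vert \,\phi (t\Vert a-x\Vert )$; the integrands are bounded uniformly in $n$ and $t$ (the $a_{n}$ form a bounded set and $\phi $ is increasing), so dominated convergence makes the right-hand side tend to $\int_{0}^{1}\Vert a-x\Vert \phi (t\Vert a-x\Vert )\,dt=\psi (\Vert a-x\Vert )>0$. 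Thus $\psi (\Vert a_{n}-x\Vert )-\psi (\Vert a_{n}-a\Vert )$ converges to a strictly positive number; passing to $\limsup $ and cancelling the continuous, strictly increasing $\psi $ gives the claimed strict inequality.

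Finally I would deduce demiclosedness. Suppose $a_{n}\rightharpoonup a$, $(I-T)a_{n}\to p$, and, for contradiction, $(I-T)a\neq p$, i.e.\ $x:=Ta+p\neq a$. By nonexpansiveness of $T$,
\[
\Vert a_{n}-x\Vert \leq \Vert (I-T)a_{n}-p\Vert +\Vert Ta_{n}-Ta\Vert \leq \Vert (I-T)a_{n}-p\Vert +\Vert a_{n}-a\Vert ,
\]
and since $\Vert (I-T)a_{n}-p\Vert \to 0$ this forces $\limsup_{n}\Vert a_{n}-x\Vert \leq \limsup_{n}\Vert a_{n}-a\Vert $, contradicting Opial's inequality. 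Hence $(I-T)a=p$, and with $p=0$ this reads $Ta=a$, i.e.\ $a\in F(T)$. The main obstacle is the Opial step: one must show that the duality functionals accumulated along the segment from $a$ to $x$ contribute a strictly positive amount, and this is exactly where weak sequential continuity of $J_{\phi }$ together with the strict monotonicity of $\phi $ is indispensable --- the bare first-order inequality of Lemma~\ref{l1} does not suffice, because centering those functionals at the weak limit $a$ makes them vanish in the limit.
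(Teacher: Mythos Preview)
The paper does not supply its own proof of this lemma: it is quoted as a preliminary result with a citation to Browder, so there is nothing in the paper to compare your argument against. Your proof is correct and follows the classical route --- first establish Opial's property from the weak sequential continuity of $J_{\phi}$ (the Gossez--Lami~Dozo argument, carried out via the integral representation of $\psi(\Vert\cdot\Vert)$ and dominated convergence), then deduce demiclosedness by contradiction through the estimate $\Vert a_{n}-(Ta+p)\Vert\le\Vert (I-T)a_{n}-p\Vert+\Vert a_{n}-a\Vert$. The only step that would benefit from one extra sentence is the passage from
\[
\lim_{n\to\infty}\bigl[\psi(\Vert a_{n}-x\Vert)-\psi(\Vert a_{n}-a\Vert)\bigr]=\psi(\Vert a-x\Vert)>0
\]
to the strict inequality between the limsups of the norms themselves: choose a subsequence along which $\Vert a_{n}-a\Vert$ attains its limsup $L$; along that subsequence $\psi(\Vert a_{n}-x\Vert)\to\psi(L)+\psi(\Vert a-x\Vert)>\psi(L)$, and strict monotonicity and continuity of $\psi$ then force $\limsup_{n}\Vert a_{n}-x\Vert>L$. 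Reflexivity, incidentally, is not used in your argument (the weak limit $a$ is handed to you in the hypothesis); it is included in the statement because in applications one first extracts a weakly convergent subsequence from a bounded one.
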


\begin{lemma}
\label{l5}\cite{Park94} Let $\left\{ \mu _{n}\right\} _{n\in 
\mathbb{N}
}$ be a nonnegative real sequence and satisfies the following inequality%
\begin{equation*}
\mu _{n+1}\leq \left( 1-\varphi _{n}\right) \mu _{n}+\varphi _{n}\epsilon
_{n}\text{,}
\end{equation*}

and assume that $\left\{ \varphi _{n}\right\} _{n\in 
\mathbb{N}
}$ and $\left\{ \epsilon _{n}\right\} _{n\in 
\mathbb{N}
}$ satisfy the following conditions:

$\left( 1\right) $ $\left\{ \varphi _{n}\right\} _{n\in 
\mathbb{N}
}\subset \left[ 0,1\right] $ and $\dsum\limits_{n=1}^{\infty }\varphi
_{n}=\infty $,

$\left( 2\right) $ $\lim \sup_{n\rightarrow \infty }\epsilon _{n}\leq 0$, or

$\left( 3\right) $ $\dsum\limits_{n=1}^{\infty }\varphi _{n}\epsilon
_{n}<\infty $,

then $\lim_{n\rightarrow \infty }\mu _{n}=0$.
\end{lemma}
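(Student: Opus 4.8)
The plan is to treat Lemma~\ref{l5} by directly unrolling the recursive inequality, handling the two alternative hypotheses $(2)$ and $(3)$ with a common engine: the elementary bound $1-x\le e^{-x}$ for $x\ge 0$, which together with $\{\varphi_n\}\subset[0,1]$ and $\sum_n\varphi_n=\infty$ gives, for every fixed index $N$,
\[
0\le \prod_{k=N}^{n}(1-\varphi_k)\le \exp\Bigl(-\sum_{k=N}^{n}\varphi_k\Bigr)\longrightarrow 0\qquad(n\to\infty).
\]
Iterating $\mu_{j+1}\le(1-\varphi_j)\mu_j+\varphi_j\epsilon_j$ from $j=N$ up to $j=n$ yields, for all $n\ge N$,
\[
\mu_{n+1}\le\Bigl(\prod_{k=N}^{n}(1-\varphi_k)\Bigr)\mu_N+\sum_{j=N}^{n}\Bigl(\prod_{k=j+1}^{n}(1-\varphi_k)\Bigr)\varphi_j\epsilon_j,
\]
with the convention that an empty product equals $1$. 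This inequality, plus the telescoping identity $\sum_{j=N}^{n}\varphi_j\prod_{k=j+1}^{n}(1-\varphi_k)=1-\prod_{k=N}^{n}(1-\varphi_k)\le 1$, is the backbone of both cases; note that $\mu_N$ is a fixed finite real number, so no a priori boundedness of $\{\mu_n\}$ is needed.

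For hypothesis $(2)$, i.e. $\limsup_n\epsilon_n\le 0$, I would argue as follows. Fix $\varepsilon>0$ and choose $N$ so large that $\epsilon_j\le\varepsilon$ for all $j\ge N$. Since $\varphi_j\ge0$ we then have $\varphi_j\epsilon_j\le\varphi_j\varepsilon$, and because every product coefficient lies in $[0,1]$ the backbone inequality gives $\mu_{n+1}\le\bigl(\prod_{k=N}^{n}(1-\varphi_k)\bigr)\mu_N+\varepsilon\bigl(1-\prod_{k=N}^{n}(1-\varphi_k)\bigr)\le\bigl(\prod_{k=N}^{n}(1-\varphi_k)\bigr)\mu_N+\varepsilon$. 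Letting $n\to\infty$ and invoking the product estimate above, $\limsup_n\mu_n\le\varepsilon$; since $\varepsilon>0$ is arbitrary and $\mu_n\ge0$, this forces $\mu_n\to0$.

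For hypothesis $(3)$, i.e. $\sum_n\varphi_n\epsilon_n<\infty$, I would bound the weighted error sum in the backbone inequality by $\sum_{j=N}^{n}\bigl|\varphi_j\epsilon_j\bigr|$, again using that the product factors are in $[0,1]$. Given $\varepsilon>0$, pick $N$ with $\sum_{j\ge N}\bigl|\varphi_j\epsilon_j\bigr|<\varepsilon$ (this is the point at which one reads the hypothesis as absolute convergence, or uses eventual nonnegativity of $\epsilon_n$ so that the plain tail suffices). Then $\mu_{n+1}\le\bigl(\prod_{k=N}^{n}(1-\varphi_k)\bigr)\mu_N+\varepsilon$ for $n\ge N$, and letting $n\to\infty$ gives $\limsup_n\mu_n\le\varepsilon$, hence $\mu_n\to0$ exactly as before.

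The only genuinely delicate point is the treatment of the weighted error sum $\sum_{j=N}^{n}\bigl(\prod_{k=j+1}^{n}(1-\varphi_k)\bigr)\varphi_j\epsilon_j$: one must notice that, because the unrolling starts at $N$, the entire ``history'' before $N$ is absorbed into the harmless term $\bigl(\prod_{k=N}^{n}(1-\varphi_k)\bigr)\mu_N$, which is killed by $\prod_{k=N}^{n}(1-\varphi_k)\to0$, while the weights $\varphi_j\prod_{k=j+1}^{n}(1-\varphi_k)$ summing to at most $1$ is what converts the estimate $\epsilon_j\le\varepsilon$ (resp. the small tail of $\sum\varphi_n\epsilon_n$) into a bound of order $\varepsilon$. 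Everything else is routine iteration of the recursion and the standard fact that a series with divergent sum of terms in $[0,1]$ produces vanishing partial products.
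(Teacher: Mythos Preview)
The paper does not prove Lemma~\ref{l5}; it is quoted from \cite{Park94} (and is essentially Xu's well-known lemma) as a black box, so there is nothing to compare your argument against. Your unrolling-plus-$\prod(1-\varphi_k)\le e^{-\sum\varphi_k}\to0$ approach is exactly the standard one, and the treatment of hypothesis~$(2)$ is clean and correct.

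The only point worth flagging is your handling of hypothesis~$(3)$. As stated in the paper, $\sum_n\varphi_n\epsilon_n<\infty$ literally means convergence of the series, not absolute convergence; you correctly note this and hedge by reading it either as absolute convergence or as eventual nonnegativity of $\epsilon_n$. That hedge is necessary: if the series converges only conditionally, your bound $\bigl|\sum_{j=N}^{n}\prod_{k=j+1}^{n}(1-\varphi_k)\,\varphi_j\epsilon_j\bigr|\le\sum_{j\ge N}|\varphi_j\epsilon_j|$ need not be small, and the weighted partial sum with coefficients in $[0,1]$ is not in general controlled by the unweighted tail. In the literature this lemma is invariably stated with $\sum_n|\varphi_n\epsilon_n|<\infty$ (or with $\epsilon_n\ge0$), and that is also how the paper uses it, so your reading is the intended one; just be aware that the argument as written genuinely needs that stronger form of~$(3)$.
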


\begin{lemma}
\label{l6}\cite{Takahashi00} Let $\ B$ be a real Banach space, and let $A$
be an $m-$accretive operator on $\ B$. For $t>0$, let $J_{t}$ be a resolvent
operator related to $A$ and $t$. Then%
\begin{equation*}
\left\Vert J_{k}a-J_{l}a\right\Vert \leq \frac{\left\vert k-l\right\vert }{k}%
\left\Vert a-J_{k}a\right\Vert \text{, for all }k,l>0\text{ and }a\in B\text{%
.}
\end{equation*}
\end{lemma}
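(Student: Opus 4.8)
The plan is to reduce the estimate to the nonexpansiveness of a single resolvent, via the classical resolvent identity. First I would recall the standard facts that go into this: since $A$ is accretive, each resolvent $J_r=(I+rA)^{-1}$ is single-valued, and since $A$ is $m$-accretive, $R(I+rA)=B$ for every $r>0$, so $J_r$ is defined on all of $B$; moreover $J_r$ is nonexpansive. These all follow from the accretivity inequality $\langle u-v,j\rangle\ge 0$ for $u\in Ax$, $v\in Ay$ with a suitable $j\in J(x-y)$, exactly as set up in the preliminaries.

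Next I would fix $k,l>0$ and $a\in B$ and set $x=J_k a$. By the definition of the resolvent, $a\in x+kAx$, so $\tfrac1k(a-x)\in Ax$. Multiplying this inclusion by $l$ gives $\tfrac{l}{k}(a-x)\in lAx$, which rearranges to $\bigl(\tfrac{l}{k}a+(1-\tfrac{l}{k})x\bigr)-x\in lAx$; that is precisely the assertion
\[
J_k a \;=\; J_l\!\left(\tfrac{l}{k}\,a+\bigl(1-\tfrac{l}{k}\bigr)J_k a\right).
\]
This resolvent identity is the real content of the lemma, and spotting it (rather than any later computation) is the step I expect to be the main obstacle; the one technical point to check is that $J_l$ is genuinely defined at the point $\tfrac{l}{k}a+(1-\tfrac{l}{k})J_k a$, which is exactly where $m$-accretivity is used.

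Finally I would feed this identity into the nonexpansiveness of $J_l$:
\[
\|J_k a-J_l a\|=\left\|J_l\!\left(\tfrac{l}{k}a+\bigl(1-\tfrac{l}{k}\bigr)J_k a\right)-J_l a\right\|\le\left\|\tfrac{l}{k}a+\bigl(1-\tfrac{l}{k}\bigr)J_k a-a\right\|,
\]
and then simplify the right-hand side, which collapses to $\bigl|1-\tfrac{l}{k}\bigr|\,\|a-J_k a\|=\frac{|k-l|}{k}\,\|a-J_k a\|$. Since $k,l$ and $a$ were arbitrary, this is the claimed inequality. Beyond the resolvent identity, the argument is just this one-line simplification, so no further difficulty is anticipated.
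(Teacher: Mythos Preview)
Your argument is correct and is precisely the standard proof of this resolvent inequality: derive the resolvent identity $J_k a = J_l\bigl(\tfrac{l}{k}a+(1-\tfrac{l}{k})J_k a\bigr)$ from the inclusion $\tfrac{1}{k}(a-J_k a)\in A(J_k a)$, and then apply the nonexpansiveness of $J_l$. Note, however, that the paper does not supply its own proof of this lemma; it is simply quoted as a preliminary result from \cite{Takahashi00}, so there is no in-paper argument to compare against. Your proof is exactly the one found in that reference.
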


\begin{lemma}
\label{l7}\cite{Mainge08} Let $\left\{ \mu _{n}\right\} _{n\in 
\mathbb{N}
}$ be a sequence of real numbers such that there exists a subsequence $%
\left\{ \mu _{n_{i}}\right\} _{i\in 
\mathbb{N}
}$ of $\left\{ \mu _{n}\right\} _{n\in 
\mathbb{N}
}$ which satisfies $\mu _{n_{i}}<\mu _{n_{i+1}}$ for all $i\geq 0$. Also, we
consider a subsequence $\left\{ \eta _{\left( n\right) }\right\} _{n\geq
n_{0}}\subset 
\mathbb{N}
$ defined by%
\begin{equation*}
\eta _{\left( n\right) }=\max \left\{ k\leq n:\mu _{k}\leq \mu
_{k+1}\right\} \text{.}
\end{equation*}

Then $\left\{ \eta _{\left( n\right) }\right\} _{n\geq n_{0}}$ is a
nondecreasing sequence providing $\lim_{n\rightarrow \infty }\eta _{\left(
n\right) }=\infty $, for all $n\geq n_{0}$. Hence, it holds that $\mu _{\eta
_{\left( n\right) }}\leq \mu _{\eta _{\left( n\right) +1}}$, and implies
that $\mu _{n}\leq \mu _{\eta _{\left( n\right) +1}}$.
\end{lemma}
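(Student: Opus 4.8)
The plan is to work directly with the set $S=\{k\in\mathbb{N}:\mu_k\le\mu_{k+1}\}$ and to show first that it is infinite. Fix $i\ge 0$ and consider the block of indices $n_i,n_i+1,\dots,n_{i+1}$. If we had $\mu_k>\mu_{k+1}$ for every $k\in\{n_i,\dots,n_{i+1}-1\}$, then the finite chain $\mu_{n_i}>\mu_{n_i+1}>\cdots>\mu_{n_{i+1}}$ would contradict the hypothesis $\mu_{n_i}<\mu_{n_{i+1}}$. Hence each block $[n_i,n_{i+1})$ contains at least one index $k$ with $\mu_k\le\mu_{k+1}$, i.e. an element of $S$; since these blocks are pairwise disjoint and there are infinitely many of them, $S$ is infinite. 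Writing $k_0$ for its smallest element, the set $\{k\le n:\mu_k\le\mu_{k+1}\}$ is nonempty and finite for every $n\ge k_0$, so its maximum $\eta_{(n)}$ is a well-defined element of $S$ for all such $n$ (this is the range $n\ge n_0$ in the statement).

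Next I would record the structural properties. Monotonicity: for $n\ge k_0$ one has $\{k\le n:\mu_k\le\mu_{k+1}\}\subseteq\{k\le n+1:\mu_k\le\mu_{k+1}\}$, hence $\eta_{(n)}\le\eta_{(n+1)}$, so $\{\eta_{(n)}\}$ is nondecreasing. Divergence: since $S$ is infinite, for each $M$ there is $s\in S$ with $s>M$, and then $n\ge s$ forces $\eta_{(n)}\ge s>M$; thus $\eta_{(n)}\to\infty$. Finally, $\eta_{(n)}\in S$ by construction, which says precisely $\mu_{\eta_{(n)}}\le\mu_{\eta_{(n)+1}}$.

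It remains to prove $\mu_n\le\mu_{\eta_{(n)+1}}$. If $\eta_{(n)}=n$ this is just the inequality $\mu_{\eta_{(n)}}\le\mu_{\eta_{(n)+1}}$ already obtained. If $\eta_{(n)}<n$, then for every $k$ with $\eta_{(n)}<k\le n$ maximality of $\eta_{(n)}$ forces $k\notin S$, i.e. $\mu_k>\mu_{k+1}$; chaining these strict inequalities yields $\mu_{\eta_{(n)+1}}>\mu_{\eta_{(n)+2}}>\cdots>\mu_n$, and in particular $\mu_n\le\mu_{\eta_{(n)+1}}$. Combining with the previous step gives both $\mu_{\eta_{(n)}}\le\mu_{\eta_{(n)+1}}$ and $\mu_n\le\mu_{\eta_{(n)+1}}$, which is the full assertion. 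The only point needing a little care is the first one: the hypothesis supplies only an increasing subsequence of the \emph{values} $\mu_n$, and one must convert this into the existence of infinitely many ascent indices $k$ with $\mu_k\le\mu_{k+1}$ via the block argument above; once $S$ is known to be infinite, the rest is routine bookkeeping with maxima of finite sets of integers.
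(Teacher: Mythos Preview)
The paper does not supply its own proof of this lemma; it is quoted from Maing\'e \cite{Mainge08} as a known tool and used without argument. Your proof is correct and complete for the statement as written here. In particular, your block argument is exactly what is needed to handle the hypothesis in the form given in this paper, namely that the \emph{subsequence values} $\mu_{n_i}$ are strictly increasing in $i$; this is slightly weaker than the more common formulation in which one assumes directly that $\mu_{n_i}<\mu_{n_i+1}$ (so that $n_i\in S$ automatically), and your reduction---showing each interval $[n_i,n_{i+1})$ must contain an ascent index---bridges that gap cleanly. The remaining steps (monotonicity of $\eta_{(n)}$ via set inclusion, divergence from the infinitude of $S$, and the descending chain from $\eta_{(n)}+1$ to $n$ when $\eta_{(n)}<n$) are the standard ones and are carried out correctly.
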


\begin{lemma}
\label{l8}\cite{Chang10} Let $B$ be a uniformly convex Banach space and \ $%
t>0$ be a constant. Then there exists a continuous, strictly increasing and
convex function $g:\left[ 0,2t\right) \rightarrow \left[ 0,\infty \right) $
such that%
\begin{equation*}
\left\Vert \dsum\limits_{i=1}^{\infty }\rho _{i}a_{i}\right\Vert ^{2}\leq
\dsum\limits_{i=1}^{\infty }\rho _{i}\left\Vert a_{i}\right\Vert ^{2}-\rho
_{k}\rho _{l}g\left( \left\Vert a_{k}-a_{l}\right\Vert \right)
\end{equation*}

$\forall k,l\geq 0$, $a_{i}\in B_{t}=\left\{ z\in B:\left\Vert z\right\Vert
\leq t\right\} $, $\rho _{i}\in \left( 0,1\right) $ and $i\geq 0$ with $%
\dsum\limits_{i=0}^{\infty }\rho _{i}=1$.
\end{lemma}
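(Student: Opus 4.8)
The plan is to deduce this infinite-sum estimate from the classical two-variable inequality for uniformly convex spaces. Recall the two-term inequality (due to Xu): if $B$ is uniformly convex, then for every $t>0$ there is a continuous, strictly increasing, convex function $g\colon[0,2t)\to[0,\infty)$ with $g(0)=0$ such that
\[
\|\lambda x+(1-\lambda)y\|^{2}\le\lambda\|x\|^{2}+(1-\lambda)\|y\|^{2}-\lambda(1-\lambda)\,g(\|x-y\|)
\]
for all $x,y\in B_{t}$ and all $\lambda\in[0,1]$; this is precisely the $g$ the statement asks for. So the first step is to establish (or cite) this two-term inequality, whose proof consists of constructing $g$ as a suitable continuous, strictly increasing, convex minorant produced from the modulus of convexity $\delta_{B}$ restricted to the ball $B_{t}$. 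I expect this base case to be the only genuinely delicate point; everything else is a grouping argument together with Jensen's inequality.

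Next, fix the indices $k,l$ and weights $\rho_{i}\in(0,1)$ with $\sum_{i}\rho_{i}=1$, and put $s=\rho_{k}+\rho_{l}\le1$ and $b=\frac{\rho_{k}}{s}a_{k}+\frac{\rho_{l}}{s}a_{l}$. Then $b$ is a convex combination of $a_{k},a_{l}\in B_{t}$, so $b\in B_{t}$, and since $s+\sum_{i\ne k,l}\rho_{i}=1$ we may write
\[
\sum_{i}\rho_{i}a_{i}=s\,b+\sum_{i\ne k,l}\rho_{i}a_{i},
\]
which exhibits $\sum_{i}\rho_{i}a_{i}$ as a (countable) convex combination of $b$ and the points $a_{i}$, $i\ne k,l$. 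Applying convexity of $\|\cdot\|^{2}$ (Jensen's inequality, valid for countable convex combinations by continuity of the norm, with $\sum_{i}\rho_{i}\|a_{i}\|^{2}\le t^{2}<\infty$) gives
\[
\Bigl\|\sum_{i}\rho_{i}a_{i}\Bigr\|^{2}\le s\|b\|^{2}+\sum_{i\ne k,l}\rho_{i}\|a_{i}\|^{2}.
\]

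Finally, bound $\|b\|^{2}$ by the two-term inequality with $\lambda=\rho_{k}/s$ and $1-\lambda=\rho_{l}/s$; multiplying through by $s$ yields
\[
s\|b\|^{2}\le\rho_{k}\|a_{k}\|^{2}+\rho_{l}\|a_{l}\|^{2}-\frac{\rho_{k}\rho_{l}}{s}\,g(\|a_{k}-a_{l}\|).
\]
Since $0<s\le1$ and $g\ge0$, we have $\frac{\rho_{k}\rho_{l}}{s}\,g(\|a_{k}-a_{l}\|)\ge\rho_{k}\rho_{l}\,g(\|a_{k}-a_{l}\|)$, so substituting into the previous display and recombining the norm-squared terms gives
\[
\Bigl\|\sum_{i}\rho_{i}a_{i}\Bigr\|^{2}\le\sum_{i}\rho_{i}\|a_{i}\|^{2}-\rho_{k}\rho_{l}\,g(\|a_{k}-a_{l}\|),
\]
which is the assertion. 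In short: the two-term uniform-convexity inequality supplies $g$, the grouping $s\,b+\sum_{i\ne k,l}\rho_{i}a_{i}$ collapses the infinitely many summands into a single use of that inequality, and the monotonicity and positivity of $g$ make the weight correction $\rho_{k}\rho_{l}/s\ge\rho_{k}\rho_{l}$ harmless.
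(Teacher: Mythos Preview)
The paper does not supply its own proof of this lemma: it is quoted from \cite{Chang10} and used as a tool in the main theorems, so there is nothing to compare your argument against within the paper itself.

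That said, your derivation is correct and is in fact the standard route to this inequality. You reduce the infinite convex combination to the two-term Xu inequality by isolating the pair $(a_{k},a_{l})$ in a single barycenter $b$, then handle the remaining terms with ordinary convexity of $\|\cdot\|^{2}$; the final observation $\rho_{k}\rho_{l}/s\ge\rho_{k}\rho_{l}$ (since $s\le1$) is exactly what makes the coefficient come out as stated. The only place one might want an extra word is the passage from finite to countable convex combinations in the Jensen step, but since all $a_{i}\in B_{t}$ the series $\sum_{i}\rho_{i}a_{i}$ converges absolutely and the finite-sum inequality passes to the limit by continuity of the norm, so this is routine.
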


\section{Main results}

\begin{theorem}
Let $B$ be a real uniformly convex Banach space having the normalized
duality mapping $J$ and $C$ be a $ccs$ of $B$. Assume that $\left\{
T_{i}\right\} _{i\in 
\mathbb{N}
\cup \left\{ 0\right\} }$ is a infinite family of quasi nonexpansive
mappings given in the form $T_{i}:C\rightarrow C$ such that $%
F=\dbigcap\limits_{i=0}^{\infty }F\left( T_{i}\right) \neq \varnothing $,
and for each $i\geq 0$, $T_{i}-I$ is demiclosed at zero. Let $\left\{
v_{n}\right\} _{n\in 
\mathbb{N}
}$ be a sequence generated by%
\begin{equation}
\left\{ 
\begin{array}{c}
v_{1}\text{, }u\in C\text{ arbitrarily chosen, \ \ \ \ \ \ \ \ \ \ \ \ \ \ \
\ \ \ \ \ \ \ } \\ 
v_{n+1}=\xi _{n}u+\left( 1-\zeta _{n}\right) T_{0}v_{n}+\left( \zeta
_{n}-\xi _{n}\right) T_{0}w_{n}\text{ \ \ } \\ 
w_{n}=\varphi _{n,0}v_{n}+\dsum\limits_{i=1}^{\infty }\varphi
_{n,i}T_{i}v_{n}\text{, \ }\ n\geq 0\text{,\ \ \ \ \ \ \ \ \ }%
\end{array}%
\right.  \label{itnew}
\end{equation}

where$\left\{ \zeta _{n}\right\} _{n\in 
\mathbb{N}
}$, $\left\{ \xi _{n}\right\} _{n\in 
\mathbb{N}
}$ and $\left\{ \varphi _{n,i}\right\} _{n\in 
\mathbb{N}
,i\in 
\mathbb{N}
\cup \left\{ 0\right\} }$ are sequences in $\left[ 0,1\right] $ satisfying
the following control conditions:

\qquad $\left( 1\right) ~\lim_{n\rightarrow \infty }\xi _{n}=0$;

\qquad $\left( 2\right) ~\dsum\limits_{n=1}^{\infty }\xi _{n}=\infty $;

\qquad $\left( 3\right) ~\varphi _{n,0}+\dsum\limits_{i=1}^{\infty }\varphi
_{n,i}=1$, for all $n\in 
\mathbb{N}
$;

\qquad $\left( 4\right) ~\lim \inf_{n\rightarrow \infty }\zeta _{n}\varphi
_{n,0}\varphi _{n,i}>0$, for all $n\in 
\mathbb{N}
$.

Then $\left\{ v_{n}\right\} _{n\in 
\mathbb{N}
}$ converges strongly as $n\rightarrow \infty $ to $P_{F}u$, where the map $%
P_{F}:B\rightarrow F$ is the metric projection.
\end{theorem}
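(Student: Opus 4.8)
The plan is to run the Halpern template: use the uniform-convexity inequality of Lemma \ref{l8} to extract asymptotic regularity, use the demiclosedness hypothesis to place weak cluster points of $\{v_{n}\}$ inside $F$, and close a Lyapunov-type recursion with Lemma \ref{l5}, while invoking Lemma \ref{l7} (Maing\'e) to avoid assuming monotonicity of $\{\|v_{n}-P_{F}u\|\}$.

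First I would note that, $B$ being uniformly convex, it is reflexive and strictly convex, so each $F(T_{i})$ is closed and convex; hence $F$ is a nonempty closed convex set and $q:=P_{F}u$ is well defined. For $p\in F$, quasi-nonexpansiveness of the $T_{i}$ and convexity of $\|\cdot\|$ give $\|w_{n}-p\|\le\|v_{n}-p\|$ and then $\|v_{n+1}-p\|\le\xi_{n}\|u-p\|+(1-\xi_{n})\|v_{n}-p\|$, so by induction $\|v_{n}-p\|\le\max\{\|v_{1}-p\|,\|u-p\|\}=:R$ and $\{v_{n}\},\{w_{n}\},\{T_{i}v_{n}\},\{T_{0}w_{n}\}$ all lie in $B_{R}$. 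Applying Lemma \ref{l8} to $w_{n}=\varphi_{n,0}v_{n}+\sum_{i\ge1}\varphi_{n,i}T_{i}v_{n}$, once pairing the $i$-th summand against the $0$-th and once pairing $v_{n}$ against the normalized remainder $s_{n}:=(1-\varphi_{n,0})^{-1}\sum_{i\ge1}\varphi_{n,i}T_{i}v_{n}$, and using quasi-nonexpansiveness, gives for every $i\ge1$
\begin{equation}
\|w_{n}-q\|^{2}\le\|v_{n}-q\|^{2}-\varphi_{n,0}\varphi_{n,i}\,g(\|v_{n}-T_{i}v_{n}\|)
\tag{W}
\end{equation}
and the same inequality with $\varphi_{n,0}(1-\varphi_{n,0})\,g(\|v_{n}-s_{n}\|)$ in place of the last term. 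Substituting (W) into $\|v_{n+1}-q\|^{2}\le\xi_{n}\|u-q\|^{2}+(1-\zeta_{n})\|v_{n}-q\|^{2}+(\zeta_{n}-\xi_{n})\|w_{n}-q\|^{2}$ (convexity of $\|\cdot\|^{2}$ plus quasi-nonexpansiveness of $T_{0}$) yields
\begin{equation}
(\zeta_{n}-\xi_{n})\varphi_{n,0}\varphi_{n,i}\,g(\|v_{n}-T_{i}v_{n}\|)\le\xi_{n}\|u-q\|^{2}+\|v_{n}-q\|^{2}-\|v_{n+1}-q\|^{2},
\tag{R}
\end{equation}
and its $s_{n}$-analogue. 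Finally, writing $v_{n+1}=\xi_{n}u+(1-\xi_{n})z_{n}$ with $z_{n}=\tfrac{1-\zeta_{n}}{1-\xi_{n}}T_{0}v_{n}+\tfrac{\zeta_{n}-\xi_{n}}{1-\xi_{n}}T_{0}w_{n}$ (so $\|z_{n}-q\|\le\|v_{n}-q\|$) and using Lemma \ref{l1},
\begin{equation}
\|v_{n+1}-q\|^{2}\le(1-\xi_{n})\|v_{n}-q\|^{2}+2\xi_{n}\langle u-q,\,J(v_{n+1}-q)\rangle.
\tag{L}
\end{equation}

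Now the dichotomy. Since $\xi_{n}\to0$, condition $(4)$ forces each of $\varphi_{n,0}$, $\varphi_{n,i}$, $\zeta_{n}$ and $\zeta_{n}-\xi_{n}$ to stay bounded away from $0$. \textbf{Case 1: $\{\|v_{n}-q\|\}$ is eventually nonincreasing.} Then it converges, the right side of (R) tends to $0$, and the properties of $g$ give $\|v_{n}-T_{i}v_{n}\|\to0$ for all $i\ge1$; likewise $\|v_{n}-s_{n}\|\to0$, hence $\|w_{n}-v_{n}\|=(1-\varphi_{n,0})\|v_{n}-s_{n}\|\to0$, and combining this with one further application of Lemma \ref{l8} to the three-term combination defining $v_{n+1}$ (pairing $T_{0}v_{n}$ against $T_{0}w_{n}$) and the recursion one extracts $\|v_{n+1}-v_{n}\|\to0$ and $\|v_{n}-T_{0}v_{n}\|\to0$. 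Thus every weak subsequential limit of $\{v_{n}\}$ lies in $F$, by demiclosedness of $T_{i}-I$ at $0$. Taking a subsequence that realizes $\limsup_{n}\langle u-q,J(v_{n+1}-q)\rangle$ and, by reflexivity, converges weakly to some $\tilde{q}\in F$, the variational characterization of the metric projection $q=P_{F}u$ (that is, $\langle q-b,J(u-q)\rangle\ge0$ for all $b\in F$, cf. the remark after Definition \ref{d1} and Lemma \ref{l2}) gives $\limsup_{n}\langle u-q,J(v_{n+1}-q)\rangle\le0$; Lemma \ref{l5} applied to (L) then gives $\|v_{n}-q\|\to0$. \textbf{Case 2: $\{\|v_{n}-q\|\}$ is not eventually monotone.} Apply Lemma \ref{l7} to $\mu_{n}=\|v_{n}-q\|^{2}$: along $\eta(n)$ the right side of (R) is at most $\xi_{\eta(n)}\|u-q\|^{2}\to0$, so the asymptotic regularity above holds at the indices $\eta(n)$, while (L) together with $\mu_{\eta(n)}\le\mu_{\eta(n)+1}$ gives $\mu_{n}\le\mu_{\eta(n)+1}\le2\langle u-q,J(v_{\eta(n)+1}-q)\rangle$; the same weak-limit argument bounds the $\limsup$ of the right side by $0$, whence $\mu_{n}\to0$. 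In either case $v_{n}\to P_{F}u$.

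The hard part will be the asymptotic regularity of the outer map $T_{0}$. Because $T_{0}$ enters the recursion both directly, as $T_{0}v_{n}$, and through the composite $T_{0}w_{n}$, and because the $T_{i}$ are only quasi-nonexpansive (so $\|T_{0}v_{n}-T_{0}w_{n}\|$ is not a priori controlled by $\|v_{n}-w_{n}\|$), deducing $\|v_{n}-T_{0}v_{n}\|\to0$ is the one nonroutine step: it forces one to squeeze out, by repeated use of Lemma \ref{l8} on the combination defining $v_{n+1}$, both $\|T_{0}v_{n}-T_{0}w_{n}\|\to0$ and $\|v_{n+1}-v_{n}\|\to0$, and then to combine these with $\|w_{n}-v_{n}\|\to0$. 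A secondary delicate point is the passage to the limit in $\langle u-q,J(v_{n_{k}}-q)\rangle$, which needs a continuity property of $J$ along weakly convergent sequences (as is implicit in Lemmas \ref{l2}--\ref{l4}) and some care that the variational inequality obtained is precisely the one characterizing $P_{F}u$. The infinite family, by contrast, causes no difficulty: condition $(4)$ keeps every weight $\varphi_{n,i}$ (and $\zeta_{n}$) bounded below, so $\|v_{n}-T_{i}v_{n}\|\to0$ for all $i$ simultaneously.
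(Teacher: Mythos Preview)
Your proposal follows essentially the same route as the paper: establish boundedness via quasi-nonexpansiveness and Lemma~\ref{l8}, derive the key recursion $\|v_{n+1}-q\|^{2}\le(1-\xi_{n})\|v_{n}-q\|^{2}+2\xi_{n}\langle u-q,J(v_{n+1}-q)\rangle$ through Lemma~\ref{l1}, extract the asymptotic-regularity bound $\zeta_{n}\varphi_{n,0}\varphi_{n,i}g(\|v_{n}-T_{i}v_{n}\|)\le\|v_{n}-q\|^{2}-\|v_{n+1}-q\|^{2}+\xi_{n}K$, and then run Maing\'e's dichotomy (Lemma~\ref{l7}) together with demiclosedness and Lemma~\ref{l5} to conclude. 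You are in fact more scrupulous than the paper at the two places you flag as delicate: the paper passes from $\|T_{0}v_{n}-T_{0}w_{n}\|$ to $\|v_{n}-w_{n}\|$ as if $T_{0}$ were nonexpansive rather than merely quasi-nonexpansive, and it tacitly uses weak-to-weak$^{*}$ sequential continuity of $J$ in the limsup step, so your caution there is well placed rather than excessive.
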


\begin{proof}
The proof consists of three parts.

Step 1. Prove that $\left\{ v_{n}\right\} _{n\in 
\mathbb{N}
}$, $\left\{ w_{n}\right\} _{n\in 
\mathbb{N}
}$ and $\left\{ T_{i}v_{n}\right\} _{n\in 
\mathbb{N}
,i\in 
\mathbb{N}
\cup \left\{ 0\right\} }$ are bounded. Firstly, we show that $\left\{
v_{n}\right\} _{n\in 
\mathbb{N}
}$ is bounded. Let $p\in F$ be fixed. By Lemma \ref{l8}, we have the
following inequality%
\begin{eqnarray}
\left\Vert w_{n}-p\right\Vert ^{2} &=&\left\Vert \varphi
_{n,0}v_{n}+\dsum\limits_{i=1}^{\infty }\varphi
_{n,i}T_{i}v_{n}-p\right\Vert ^{2}  \notag \\
&\leq &\varphi _{n,0}\left\Vert v_{n}-p\right\Vert
^{2}+\dsum\limits_{i=1}^{\infty }\varphi _{n,i}\left\Vert
T_{i}v_{n}-p\right\Vert ^{2}-\varphi _{n,0}\varphi _{n,i}g\left( \left\Vert
v_{n}-T_{i}v_{n}\right\Vert \right)  \notag \\
&\leq &\varphi _{n,0}\left\Vert v_{n}-p\right\Vert
^{2}+\dsum\limits_{i=1}^{\infty }\varphi _{n,i}\left\Vert v_{n}-p\right\Vert
^{2}-\varphi _{n,0}\varphi _{n,i}g\left( \left\Vert
v_{n}-T_{i}v_{n}\right\Vert \right)  \notag \\
&=&\left\Vert v_{n}-p\right\Vert ^{2}-\varphi _{n,0}\varphi _{n,i}g\left(
\left\Vert v_{n}-T_{i}v_{n}\right\Vert \right)  \label{1} \\
&\leq &\left\Vert v_{n}-p\right\Vert ^{2}\text{.}  \notag
\end{eqnarray}

This show that 
\begin{eqnarray*}
\left\Vert v_{n+1}-p\right\Vert &=&\left\Vert \xi _{n}u+\left( 1-\zeta
_{n}\right) T_{0}v_{n}+\left( \zeta _{n}-\xi _{n}\right) T_{0}w_{n}\text{ }%
-p\right\Vert \\
&\leq &\xi _{n}\left\Vert u-p\right\Vert +\left( 1-\zeta _{n}\right)
\left\Vert T_{0}v_{n}-p\right\Vert +\left( \zeta _{n}-\xi _{n}\right)
\left\Vert T_{0}w_{n}\text{ }-p\right\Vert \\
&\leq &\xi _{n}\left\Vert u-p\right\Vert +\left( 1-\zeta _{n}\right)
\left\Vert v_{n}-p\right\Vert +\left( \zeta _{n}-\xi _{n}\right) \left\Vert
w_{n}\text{ }-p\right\Vert \\
&\leq &\xi _{n}\left\Vert u-p\right\Vert +\left( 1-\xi _{n}\right)
\left\Vert v_{n}-p\right\Vert \\
&\leq &\max \left\{ \left\Vert u-p\right\Vert \text{, }\left\Vert
v_{n}-p\right\Vert \text{ }\right\}
\end{eqnarray*}

If we continue the way of induction, we have%
\begin{equation*}
\left\Vert v_{n+1}-p\right\Vert \leq \max \left\{ \left\Vert u-p\right\Vert 
\text{, }\left\Vert v_{1}-p\right\Vert \text{ }\right\} \text{, }\forall
n\in 
\mathbb{N}
\text{.}
\end{equation*}

Therefore, we conclude that $\left\Vert v_{n+1}-p\right\Vert $ is bounded,
this implies that $\left\{ v_{n}\right\} _{n\in 
\mathbb{N}
}$ is bounded. Furthermore, it is easily show that $\left\{
T_{i}v_{n}\right\} _{n\in 
\mathbb{N}
,i\in 
\mathbb{N}
\cup \left\{ 0\right\} }$ $\ $and $\left\{ w_{n}\right\} _{n\in 
\mathbb{N}
}$ are bounded too.

Step 2. Show that for any $n\in 
\mathbb{N}
$,%
\begin{equation}
\left\Vert v_{n+1}-z\right\Vert ^{2}\leq \left( 1-\xi _{n}\right) \left\Vert
v_{n}-z\right\Vert ^{2}+2\xi _{n}\left\langle u-z,J\left( v_{n+1}-z\right)
\right\rangle \text{.}  \label{2}
\end{equation}

By considering $\left( \ref{1}\right) $, we have%
\begin{equation}
\left\Vert w_{n}-z\right\Vert ^{2}=\left\Vert v_{n}-z\right\Vert
^{2}-\varphi _{n,0}\varphi _{n,i}g\left( \left\Vert
v_{n}-T_{i}v_{n}\right\Vert \right) \text{.}  \label{3}
\end{equation}

$\left( \ref{3}\right) $ implies that%
\begin{eqnarray}
\left\Vert v_{n+1}-z\right\Vert ^{2} &=&\left\Vert \xi _{n}u+\left( 1-\zeta
_{n}\right) T_{0}v_{n}+\left( \zeta _{n}-\xi _{n}\right) T_{0}w_{n}\text{ }%
-z\right\Vert ^{2}  \notag \\
&\leq &\xi _{n}\left\Vert u-z\right\Vert ^{2}+\left( 1-\zeta _{n}\right)
\left\Vert T_{0}v_{n}-z\right\Vert ^{2}+\left( \zeta _{n}-\xi _{n}\right)
\left\Vert T_{0}w_{n}\text{ }-z\right\Vert ^{2}  \notag \\
&\leq &\xi _{n}\left\Vert u-z\right\Vert ^{2}+\left( 1-\zeta _{n}\right)
\left\Vert v_{n}-z\right\Vert ^{2}  \label{4} \\
&&+\left( \zeta _{n}-\xi _{n}\right) \left[ \left\Vert v_{n}-z\right\Vert
^{2}-\varphi _{n,0}\varphi _{n,i}g\left( \left\Vert
v_{n}-T_{i}v_{n}\right\Vert \right) \right] \\
&=&\xi _{n}\left\Vert u-z\right\Vert ^{2}+\left( 1-\xi _{n}\right)
\left\Vert v_{n}-z\right\Vert ^{2}  \notag \\
&&-\zeta _{n}\varphi _{n,0}\varphi _{n,i}g\left( \left\Vert
v_{n}-T_{i}v_{n}\right\Vert \right) +\xi _{n}\varphi _{n,0}\varphi
_{n,i}g\left( \left\Vert v_{n}-T_{i}v_{n}\right\Vert \right) \text{.}
\end{eqnarray}

Assume that $K_{1}=\sup \left\{ \left\vert \left\Vert u-z\right\Vert
^{2}-\left\Vert v_{n}-z\right\Vert ^{2}\right\vert +\xi _{n}\varphi
_{n,0}\varphi _{n,i}g\left( \left\Vert v_{n}-T_{i}v_{n}\right\Vert \right)
\right\} $. It is conclude form $\left( \ref{4}\right) $ that 
\begin{equation}
\zeta _{n}\varphi _{n,0}\varphi _{n,i}g\left( \left\Vert
v_{n}-T_{i}v_{n}\right\Vert \right) \leq \left\Vert v_{n}-z\right\Vert
^{2}-\left\Vert v_{n+1}-z\right\Vert ^{2}+\xi _{n}K_{1}\text{.}  \label{5}
\end{equation}

By Lemma \ref{l1} and $\left( \ref{1}\right) $, we have%
\begin{eqnarray*}
\left\Vert v_{n+1}-z\right\Vert ^{2} &=&\left\Vert \xi _{n}u+\left( 1-\zeta
_{n}\right) T_{0}v_{n}+\left( \zeta _{n}-\xi _{n}\right) T_{0}w_{n}\text{ }%
-z\right\Vert ^{2} \\
&=&\left\Vert \xi _{n}\left( u-z\right) +\left( 1-\zeta _{n}\right) \left(
T_{0}v_{n}-z\right) +\left( \zeta _{n}-\xi _{n}\right) \left( T_{0}w_{n}%
\text{ }-z\right) \right\Vert ^{2} \\
&\leq &\left\Vert \left( 1-\zeta _{n}\right) \left( T_{0}v_{n}-z\right)
+\left( \zeta _{n}-\xi _{n}\right) \left( T_{0}w_{n}\text{ }-z\right)
\right\Vert ^{2} \\
&&+2\left\langle \xi _{n}\left( u-z\right) ,J\left( v_{n+1}-z\right)
\right\rangle \\
&\leq &\left( 1-\zeta _{n}\right) \left\Vert T_{0}v_{n}-z\right\Vert
^{2}+\left( \zeta _{n}-\xi _{n}\right) \left\Vert T_{0}w_{n}\text{ }%
-z\right\Vert ^{2} \\
&&+2\left\langle \xi _{n}\left( u-z\right) ,J\left( v_{n+1}-z\right)
\right\rangle \\
&\leq &\left( 1-\zeta _{n}\right) \left\Vert v_{n}-z\right\Vert ^{2}+\left(
\zeta _{n}-\xi _{n}\right) \left\Vert w_{n}\text{ }-z\right\Vert ^{2} \\
&&+2\xi _{n}\left\langle u-z,J\left( v_{n+1}-z\right) \right\rangle \\
&\leq &\left( 1-\zeta _{n}\right) \left\Vert v_{n}-z\right\Vert ^{2}+\left(
\zeta _{n}-\xi _{n}\right) \left\Vert v_{n}-z\right\Vert ^{2} \\
&&+2\xi _{n}\left\langle u-z,J\left( v_{n+1}-z\right) \right\rangle \\
&=&\left( 1-\xi _{n}\right) \left\Vert v_{n}-z\right\Vert ^{2}+2\xi
_{n}\left\langle u-z,J\left( v_{n+1}-z\right) \right\rangle \text{.}
\end{eqnarray*}

Step 3. We show that $v_{n}\rightarrow z$ as $n\rightarrow \infty $.

For this step, we will examine two cases.

Case 1. Suppose that there exists $n_{0}\in 
\mathbb{N}
$ such that $\left\{ \left\Vert v_{n}-z\right\Vert \right\} _{n\geq n_{0}}$
is nonincreasing. furthermore, the sequence $\left\{ \left\Vert
v_{n}-z\right\Vert \right\} _{n\in 
\mathbb{N}
}$ is convergent. Thus, it is clear that $\left\Vert v_{n}-z\right\Vert
^{2}-\left\Vert v_{n+1}-z\right\Vert ^{2}\rightarrow 0$ as $n\rightarrow
\infty $. In view of condition $\left( 4\right) $ and $\left( \ref{5}\right) 
$, we have%
\begin{equation*}
\lim_{n\rightarrow \infty }g\left( \left\Vert v_{n}-T_{i}v_{n}\right\Vert
\right) =0\text{.}
\end{equation*}%
From the properties of g, we have 
\begin{equation*}
\lim_{n\rightarrow \infty }\left\Vert v_{n}-T_{i}v_{n}\right\Vert =0\text{.}
\end{equation*}%
Also, we can construct the sequences $\left( w_{n}-v_{n}\right) $ and $%
\left( v_{n+1}-w_{n}\right) $, as follows:%
\begin{eqnarray}
w_{n}-v_{n} &=&\varphi _{n,0}v_{n}+\dsum\limits_{i=1}^{\infty }\varphi
_{n,i}T_{i}v_{n}-v_{n}  \notag \\
&=&\dsum\limits_{i=1}^{\infty }\varphi _{n,i}\left( T_{i}v_{n}-v_{n}\right) 
\text{,}  \label{6}
\end{eqnarray}

and%
\begin{equation*}
v_{n+1}-w_{n}=\xi _{n}u+\left( 1-\zeta _{n}\right) T_{0}v_{n}+\left( \zeta
_{n}-\xi _{n}\right) T_{0}w_{n}\text{ }-w_{n}
\end{equation*}%
\begin{eqnarray}
\left\Vert v_{n+1}-w_{n}\right\Vert &=&\left\Vert \xi _{n}\left(
u-T_{0}w_{n}\right) +\zeta _{n}\left( T_{0}v_{n}-T_{0}w_{n}\right) +\left(
T_{0}v_{n}\text{ }-w_{n}\right) \right\Vert  \notag \\
&\leq &\xi _{n}\left\Vert u-T_{0}w_{n}\right\Vert +\zeta _{n}\left\Vert
T_{0}v_{n}-T_{0}w_{n}\right\Vert +\left\Vert T_{0}v_{n}\text{ }%
-w_{n}\right\Vert  \notag \\
&\leq &\xi _{n}\left\Vert u-T_{0}w_{n}\right\Vert +\zeta _{n}\left\Vert
v_{n}-w_{n}\right\Vert +\left\Vert T_{0}v_{n}\text{ }-w_{n}\right\Vert \text{%
.}  \label{7}
\end{eqnarray}

These imply that%
\begin{equation}
\lim_{n\rightarrow \infty }\left\Vert v_{n+1}-w_{n}\right\Vert =0\text{ \ \
\ \ \ \ \ \ \ \ \ \ and \ \ \ \ }\lim_{n\rightarrow \infty }\left\Vert
w_{n}-v_{n}\text{\ }\right\Vert =0\text{.\ \ \ \ }  \label{k}
\end{equation}

By the expressions in $\left( \ref{k}\right) $, we obtain%
\begin{equation*}
\left\Vert v_{n+1}-v_{n}\right\Vert \leq \left\Vert w_{n}-v_{n}\text{\ }%
\right\Vert +\left\Vert v_{n+1}-w_{n}\right\Vert \text{.}
\end{equation*}

This implies that%
\begin{equation}
\lim_{n\rightarrow \infty }\left\Vert v_{n+1}-v_{n}\right\Vert =0\text{.}
\label{8}
\end{equation}

Previously, we have shown that the sequence $\left\{ v_{n}\right\} _{n\in 
\mathbb{N}
}$ is bounded. Therefore, there exists a subsequence $\left\{
v_{n_{j}}\right\} _{j\in 
\mathbb{N}
}$ of $\left\{ v_{n}\right\} _{n\in 
\mathbb{N}
}$ such that $v_{n_{j}+1}\rightarrow l$ for all $j\in 
\mathbb{N}
$. By principle of demiclosedness at zero, It is concluded that $l\in F$.
Considering the above facts and Definition $\left( \ref{d1}\right) $, we
obtain%
\begin{eqnarray}
\limsup_{n\rightarrow \infty }\left\langle u-z,J\left( v_{n+1},z\right)
\right\rangle &=&\lim_{j\rightarrow \infty }\left\langle u-z,J\left(
v_{n_{j}+1}-z\right) \right\rangle  \notag \\
&=&\left\langle u-z,J\left( l-z\right) \right\rangle  \label{9} \\
&=&\left\langle u-P_{F}u,J\left( l-P_{F}u\right) \right\rangle  \notag \\
&\leq &0\text{.}  \notag
\end{eqnarray}

By Lemma $\left( \ref{l5}\right) $, \ we have the desired \ result.

Case 2. Let $\left\{ n_{j}\right\} _{j\in 
\mathbb{N}
}$ be subsequence of $\left\{ n\right\} _{n\in 
\mathbb{N}
}$ such that 
\begin{equation*}
\left\Vert v_{n_{j}}-z\right\Vert \leq \left\Vert v_{n_{j}+1}-z\right\Vert 
\text{, for all }j\in 
\mathbb{N}
\text{.}
\end{equation*}

Then, in view of Lemma $\left( \ref{l7}\right) $, there exists a
nondecreasing sequence $\left\{ m_{k}\right\} _{k\in 
\mathbb{N}
}\subset 
\mathbb{N}
$, and hence%
\begin{equation*}
\left\Vert z-v_{m_{k}}\right\Vert <\left\Vert z-v_{m_{k}+1}\right\Vert \text{
\ \ \ \ \ and \ \ \ }\left\Vert z-v_{k}\right\Vert \leq \left\Vert
z-v_{m_{k}+1}\right\Vert \text{, }\forall k\in 
\mathbb{N}
\text{.\ \ \ }
\end{equation*}

If we rewrite the equation $\left( \ref{5}\right) $ for this Lemma, we have 
\begin{eqnarray*}
\zeta _{m_{k}}\varphi _{m_{k},0}\varphi _{m_{k},i}g\left( \left\Vert
v_{m_{k}}-T_{i}v_{m_{k}}\right\Vert \right) &\leq &\left\Vert
v_{m_{k}}-z\right\Vert ^{2}-\left\Vert v_{m_{k}+1}-z\right\Vert ^{2}+\xi
_{m_{k}}K_{1} \\
&\leq &\xi _{m_{k}}K_{1}\text{, }\forall k\in 
\mathbb{N}
\text{.}
\end{eqnarray*}

Considering the conditions $\left( 1\right) $ and $\left( 2\right) $, we
obtain%
\begin{equation*}
\lim_{k\rightarrow \infty }g\left( \left\Vert
v_{m_{k}}-T_{i}v_{m_{k}}\right\Vert \right) =0\text{.}
\end{equation*}

It follows that%
\begin{equation*}
\lim_{k\rightarrow \infty }\left\Vert v_{m_{k}}-T_{i}v_{m_{k}}\right\Vert =0%
\text{.}
\end{equation*}

Therefore, using the same argument as Case 1, we have%
\begin{equation*}
\limsup_{n\rightarrow \infty }\left\langle u-z,J\left( v_{m_{k}},z\right)
\right\rangle =\limsup_{n\rightarrow \infty }\left\langle u-z,J\left(
v_{v_{m_{k}}+1},z\right) \right\rangle \leq 0\text{.}
\end{equation*}

Using $\left( \ref{2}\right) $, we get%
\begin{equation*}
\left\Vert v_{m_{k}+1}-z\right\Vert ^{2}\leq \left( 1-\xi _{m_{k}}\right)
\left\Vert v_{m_{k}}-z\right\Vert ^{2}+2\xi _{m_{k}}\left\langle u-z,J\left(
v_{m_{k}+1}-z\right) \right\rangle \text{.}
\end{equation*}

Previously, we have shown that the inequality $\left\Vert
v_{m_{k}}-z\right\Vert \leq \left\Vert v_{m_{k}+1}-z\right\Vert $ is
performed, and hence%
\begin{eqnarray*}
\xi _{m_{k}}\left\Vert v_{m_{k}}-z\right\Vert ^{2} &\leq &\left\Vert
v_{m_{k}}-z\right\Vert ^{2}-\left\Vert v_{m_{k}+1}-z\right\Vert ^{2}+2\xi
_{m_{k}}\left\langle u-z,J\left( v_{m_{k}+1}-z\right) \right\rangle \\
&\leq &2\xi _{m_{k}}\left\langle u-z,J\left( v_{m_{k}+1}-z\right)
\right\rangle \text{.}
\end{eqnarray*}

Hence, we get%
\begin{equation}
\lim_{k\rightarrow \infty }\left\Vert v_{m_{k}}-z\right\Vert =0\text{.}
\label{10}
\end{equation}

considering the expressions $\left( \ref{9}\right) $ and $\left( \ref{10}%
\right) $, we obtain%
\begin{equation*}
\lim_{k\rightarrow \infty }\left\Vert v_{m_{k}+1}-z\right\Vert =0\text{.}
\end{equation*}

Finaly, we get $\left\Vert v_{k}-z\right\Vert \leq \left\Vert
v_{m_{k}+1}-z\right\Vert $, $\forall k\in 
\mathbb{N}
$.\ It follows that $v_{m_{k}}\rightarrow z$ as $k\rightarrow \infty $. Then
we have $v_{k}\rightarrow z$ as $n\rightarrow \infty $.
\end{proof}

We obtain the following corollary for a single mapping.

\begin{corollary}
Let $B$ be a real uniformly convex Banach space having the normalized
duality mapping $J$ and $C$ be a $ccs$ of $B$. Assume that $T$ is a quasi
nonexpansive mappings given in the form $T:C\rightarrow C$ and $F$ is set of
fixed point of $T$ and, $T-I$ is demiclosed at zero. Let $\left\{
v_{n}\right\} _{n\in 
\mathbb{N}
}$ be a sequence generated by%
\begin{equation*}
\left\{ 
\begin{array}{c}
v_{1}\text{, }u\in C\text{ arbitrarily chosen, \ \ \ \ \ \ \ \ \ \ \ \ \ \ \
\ \ \ \ \ \ \ } \\ 
v_{n+1}=\xi _{n}u+\left( 1-\zeta _{n}\right) Tv_{n}+\left( \zeta _{n}-\xi
_{n}\right) Tw_{n}\text{ \ \ } \\ 
w_{n}=\left( 1-\varphi _{n}\right) v_{n}+\varphi _{n}Tv_{n}\text{, \ }\
n\geq 0\text{,\ \ \ \ \ \ \ \ \ }%
\end{array}%
\right.
\end{equation*}%
where$\left\{ \zeta _{n}\right\} _{n\in 
\mathbb{N}
}$, $\left\{ \xi _{n}\right\} _{n\in 
\mathbb{N}
}$ and $\left\{ \varphi _{n}\right\} _{n\in 
\mathbb{N}
}$ are sequences in $\left[ 0,1\right] $ satisfying the following control
conditions:

$\qquad \left( 1\right) ~\lim_{n\rightarrow \infty }\xi _{n}=0$;

$\qquad \left( 2\right) ~\dsum\limits_{n=1}^{\infty }\xi _{n}=\infty $;

$\qquad \left( 3\right) ~\lim \inf_{n\rightarrow \infty }\zeta _{n}\left(
1-\varphi _{n}\right) \varphi _{n}>0$, for all $n\in 
\mathbb{N}
$.

Then $\left\{ v_{n}\right\} _{n\in 
\mathbb{N}
}$ converges strongly as $n\rightarrow \infty $ to $P_{F}u$, where the map $%
P_{F}:B\rightarrow F$ is the metric projection.
\end{corollary}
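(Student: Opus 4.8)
The plan is to obtain this as the one-map specialization of the Theorem and then to record the few places where the argument has to be re-read rather than merely quoted. Concretely, I would apply the Theorem with the infinite family collapsed to the single mapping $T$: put $T_0=T$, keep one nontrivial index $i=1$ with $T_1=T$, and choose the weights $\varphi_{n,0}=1-\varphi_n$, $\varphi_{n,1}=\varphi_n$. Then $w_n=\varphi_{n,0}v_n+\varphi_{n,1}T_1v_n=(1-\varphi_n)v_n+\varphi_nTv_n$ and $v_{n+1}=\xi_nu+(1-\zeta_n)T_0v_n+(\zeta_n-\xi_n)T_0w_n$ are exactly the recursions in the corollary. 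Condition $(3)$ of the Theorem, $\varphi_{n,0}+\sum_i\varphi_{n,i}=1$, reduces to the identity $(1-\varphi_n)+\varphi_n=1$; condition $(4)$, $\liminf_n\zeta_n\varphi_{n,0}\varphi_{n,1}>0$, is precisely condition $(3)$ of the corollary; conditions $(1)$ and $(2)$ are unchanged; and the structural hypotheses $F=F(T)\neq\varnothing$ and $T-I$ demiclosed at zero are those of the Theorem. Hence the Theorem applies verbatim and gives $v_n\to P_Fu$.

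For completeness I would also spell out the direct three-step argument, which is a transcription of the Theorem's proof with the two-term combination $w_n=(1-\varphi_n)v_n+\varphi_nTv_n$. In Step 1, Lemma \ref{l8} applied to this combination yields $\|w_n-p\|^2\le\|v_n-p\|^2-(1-\varphi_n)\varphi_n\,g(\|v_n-Tv_n\|)\le\|v_n-p\|^2$ for $p\in F$; quasi-nonexpansiveness of $T$ and convexity of $\|\cdot\|$ then give $\|v_{n+1}-p\|\le\xi_n\|u-p\|+(1-\xi_n)\|v_n-p\|$, so $\|v_{n+1}-p\|\le\max\{\|u-p\|,\|v_1-p\|\}$ by induction, and $\{Tv_n\}$, $\{w_n\}$ are bounded as well. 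In Step 2, Lemma \ref{l1} with $a=(1-\zeta_n)(Tv_n-z)+(\zeta_n-\xi_n)(Tw_n-z)$ and $b=\xi_n(u-z)$, together with the Step 1 estimate, produces $\|v_{n+1}-z\|^2\le(1-\xi_n)\|v_n-z\|^2+2\xi_n\langle u-z,J(v_{n+1}-z)\rangle$ as well as the companion inequality $\zeta_n(1-\varphi_n)\varphi_n\,g(\|v_n-Tv_n\|)\le\|v_n-z\|^2-\|v_{n+1}-z\|^2+\xi_nK_1$ for a suitable constant $K_1$.

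Step 3, where $z=P_Fu$, splits into the usual two cases and carries the real content. If $\{\|v_n-z\|\}$ is eventually nonincreasing it converges, so $\|v_n-z\|^2-\|v_{n+1}-z\|^2\to0$; by condition $(3)$ and the companion inequality $g(\|v_n-Tv_n\|)\to0$, hence $\|v_n-Tv_n\|\to0$ by the properties of $g$; then $\|w_n-v_n\|=\varphi_n\|Tv_n-v_n\|\to0$ and, expanding $v_{n+1}-w_n$ and using condition $(1)$, $\|v_{n+1}-w_n\|\to0$, so $\|v_{n+1}-v_n\|\to0$. Passing to a subsequence $v_{n_j+1}\rightharpoonup l$ (legitimate since a uniformly convex space is reflexive), the bound $\|v_{n+1}-Tv_{n+1}\|\to0$ and demiclosedness of $T-I$ at zero give $l\in F$, and then the variational characterization of the metric projection gives $\limsup_n\langle u-z,J(v_{n+1}-z)\rangle\le0$; Lemma \ref{l5} closes this case. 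In the remaining case I would invoke Lemma \ref{l7} to produce nondecreasing indices $\{m_k\}$ with $\|v_{m_k}-z\|<\|v_{m_k+1}-z\|$ and $\|v_k-z\|\le\|v_{m_k+1}-z\|$; running the same computation along $m_k$ gives $\|v_{m_k}-Tv_{m_k}\|\to0$ and $\limsup_k\langle u-z,J(v_{m_k+1}-z)\rangle\le0$; feeding this and $\|v_{m_k}-z\|\le\|v_{m_k+1}-z\|$ into the Step 2 recursion yields $\xi_{m_k}\|v_{m_k}-z\|^2\le2\xi_{m_k}\langle u-z,J(v_{m_k+1}-z)\rangle$, hence $\|v_{m_k}-z\|\to0$, then $\|v_{m_k+1}-z\|\to0$, and finally $\|v_k-z\|\le\|v_{m_k+1}-z\|\to0$.

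The only genuinely delicate point I anticipate is the demiclosedness step: one must use that $\|v_{n+1}-Tv_{n+1}\|$ (not merely $\|v_n-Tv_n\|$) tends to zero along the chosen subsequence, which is exactly why establishing $\|v_{n+1}-v_n\|\to0$ beforehand is indispensable. Everything else is bookkeeping with the identities already exhibited in the proof of the Theorem.
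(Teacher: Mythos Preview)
Your proposal is correct and matches the paper's intent: the paper gives no separate proof of the corollary, merely introducing it as ``the following corollary for a single mapping,'' so the reduction you describe (take $T_0=T_1=T$, $\varphi_{n,0}=1-\varphi_n$, $\varphi_{n,1}=\varphi_n$) is exactly what is meant. Your detailed three-step transcription is a faithful rendition of the Theorem's proof, and your remark on needing $\|v_{n+1}-v_n\|\to0$ before invoking demiclosedness is a sharper reading than the paper itself provides.
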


\begin{theorem}
Let $B$ be a real uniformly convex Banach space having the weakly
sequentially continuous duality mapping $J_{\phi }$ and $C$ be a $ccs$ of $B$
such that $\overline{D(A_{i})}\subset C\subset \dbigcap\limits_{r>0}^{\infty
}R(I+rA_{i})$ for each $i\in N$. Assume that $\left\{ A_{i}\right\} _{i\in 
\mathbb{N}
\cup \left\{ 0\right\} }$ is an infinite family of accretive operators
satisfying the range condition, and $r_{n}>0$ and $r>0$ be such that $%
lim_{n\rightarrow \infty }r_{n}=r$. Let $%
J_{r_{n}}^{A_{i}}=(I+r_{n}A_{i})^{-1}$ be the resolvent of $A$. Let $\left\{
v_{n}\right\} _{n\in 
\mathbb{N}
}$ be a sequence generated by%
\begin{equation}
\left\{ 
\begin{array}{c}
v_{1}\text{, }u\in C\text{ arbitrarily chosen, \ \ \ \ \ \ \ \ \ \ \ \ \ \ \
\ \ \ \ \ \ \ } \\ 
v_{n+1}=\xi _{n}u+\left( 1-\zeta _{n}\right) J_{r_{n}}^{A_{0}}v_{n}+\left(
\zeta _{n}-\xi _{n}\right) J_{r_{n}}^{A_{0}}w_{n}\text{ \ \ } \\ 
w_{n}=\varphi _{n,0}v_{n}+\dsum\limits_{i=1}^{\infty }\varphi
_{n,i}J_{r_{n}}^{A_{i}}v_{n}\text{, \ }\ n\geq 0\text{,\ \ \ \ \ \ \ \ \ }%
\end{array}%
\right.  \label{res}
\end{equation}

where$\left\{ \zeta _{n}\right\} _{n\in 
\mathbb{N}
}$, $\left\{ \xi _{n}\right\} _{n\in 
\mathbb{N}
}$ and $\left\{ \varphi _{n,i}\right\} _{n\in 
\mathbb{N}
,i\in 
\mathbb{N}
\cup \left\{ 0\right\} }$ are sequences in $\left[ 0,1\right] $ satisfying
the following control conditions:

\qquad $\left( 1\right) ~\lim_{n\rightarrow \infty }\xi _{n}=0$;

\qquad $\left( 2\right) ~\dsum\limits_{n=1}^{\infty }\xi _{n}=\infty $;

\qquad $\left( 3\right) ~\varphi _{n,0}+\dsum\limits_{i=1}^{\infty }\varphi
_{n,i}=1$, for all $n\in 
\mathbb{N}
$;

\qquad $\left( 4\right) ~\lim \inf_{n\rightarrow \infty }\zeta _{n}\varphi
_{n,0}\varphi _{n,i}>0$, for all $n\in 
\mathbb{N}
$.

If $Q_{Z}:B\rightarrow Z$ is the sunny nonexpansive retraction such that $%
Z=\dbigcap\limits_{i=1}^{\infty }A_{i}^{-1}\left( 0\right) \neq \varnothing $%
, then $\left\{ v_{n}\right\} _{n\in 
\mathbb{N}
}$ converges strongly as $n\rightarrow \infty $ to $Q_{Z}u$.
\end{theorem}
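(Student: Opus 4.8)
The plan is to reduce Theorem~3 to Theorem~1 (the quasi-nonexpansive case) by verifying that the resolvent operators $J^{A_i}_{r}$ fit the hypotheses of that theorem, and then to handle the perturbation caused by the varying parameters $r_n \to r$ via Lemma~\ref{l6}. First I would record the standard facts about resolvents of accretive operators: for each $i$ the resolvent $J^{A_i}_{r}$ is single-valued and \emph{nonexpansive} on $C$ (the range condition $\overline{D(A_i)}\subset C\subset \bigcap_{r>0}R(I+rA_i)$ makes it well defined on $C$), in particular quasi-nonexpansive, and $F(J^{A_i}_{r})=A_i^{-1}(0)$ for every $r>0$; hence $F=\bigcap_{i}F(J^{A_i}_{r})=\bigcap_i A_i^{-1}(0)=Z\neq\varnothing$, which is independent of $r$. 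Moreover, since $B$ has a weakly sequentially continuous duality map, Lemma~\ref{l4} (demiclosedness of $I-T$ for nonexpansive $T$) applies to each $T=J^{A_i}_{r}$, so $J^{A_i}_{r}-I$ is demiclosed at zero. Thus for a \emph{fixed} resolvent parameter the iteration \eqref{res} is exactly the iteration \eqref{itnew} of Theorem~1 and we would already be done; the only genuinely new issue is that the contractions $J^{A_i}_{r_n}$ change with $n$.

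Next I would re-run the three-step structure of the proof of Theorem~1, inserting the error terms produced by replacing $J^{A_i}_{r_n}$ with the limit resolvent $J^{A_i}_{r}$. In Step~1 boundedness is unchanged: each $J^{A_i}_{r_n}$ is nonexpansive, so the same convex-combination estimate shows $\{v_n\}$, $\{w_n\}$ and $\{J^{A_i}_{r_n}v_n\}$ are bounded, with bound $\max\{\|u-p\|,\|v_1-p\|\}$ for any $p\in Z$. For Step~2, Lemma~\ref{l8} gives again
\begin{equation*}
\|w_n-z\|^2 \le \|v_n-z\|^2-\varphi_{n,0}\varphi_{n,i}\,g\!\left(\|v_n-J^{A_i}_{r_n}v_n\|\right),
\end{equation*}
and the same expansion via Lemma~\ref{l1} yields
\begin{equation*}
\|v_{n+1}-z\|^2\le (1-\xi_n)\|v_n-z\|^2+2\xi_n\langle u-z,J(v_{n+1}-z)\rangle,
\end{equation*}
together with the counterpart of \eqref{5},
\begin{equation*}
\zeta_n\varphi_{n,0}\varphi_{n,i}\,g\!\left(\|v_n-J^{A_i}_{r_n}v_n\|\right)\le \|v_n-z\|^2-\|v_{n+1}-z\|^2+\xi_n K_1 .
\end{equation*}
Then in Step~3 I split into the two cases exactly as before. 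In the monotone case (Case~1) I get $\|v_n-J^{A_i}_{r_n}v_n\|\to 0$, and now the new ingredient: by Lemma~\ref{l6},
\begin{equation*}
\|J^{A_i}_{r_n}v_n-J^{A_i}_{r}v_n\|\le \frac{|r_n-r|}{r_n}\,\|v_n-J^{A_i}_{r_n}v_n\|\to 0
\end{equation*}
(using boundedness of $\{v_n\}$ and $r_n\to r>0$), so $\|v_n-J^{A_i}_{r}v_n\|\to 0$ as well. Hence along a weakly convergent subsequence $v_{n_j+1}\rightharpoonup l$, demiclosedness of $I-J^{A_i}_{r}$ at zero gives $l\in F(J^{A_i}_{r})=A_i^{-1}(0)$ for every $i$, i.e.\ $l\in Z$. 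Using the variational-inequality characterisation of the sunny nonexpansive retraction (Lemma~\ref{l3} with $T=J^{A_i}_{r}$, or the characterisation $\langle u-Q_Zu,J_\phi(b-Q_Zu)\rangle\le 0$ for $b\in Z$), taking $z=Q_Zu$ gives $\limsup_n\langle u-z,J_\phi(v_{n+1}-z)\rangle\le 0$; Lemma~\ref{l5} then finishes Case~1. Case~2 is handled verbatim as in Theorem~1 using Lemma~\ref{l7}, again inserting the Lemma~\ref{l6} estimate to pass from $J^{A_i}_{r_{m_k}}$ to $J^{A_i}_{r}$ before invoking demiclosedness.

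The main obstacle, and the point that needs care rather than mechanical copying, is the passage from the ``moving target'' $J^{A_i}_{r_n}$ to a single fixed nonexpansive map whose demiclosedness can be invoked: without the resolvent identity of Lemma~\ref{l6} one only controls $\|v_n-J^{A_i}_{r_n}v_n\|$, which involves a different operator for each $n$ and therefore does not directly feed into the demiclosed principle. A secondary subtlety is the use of the correct duality map: boundedness and the Lemma~\ref{l1}/Lemma~\ref{l8} estimates are cleanest with the normalized $J$, but the retraction $Q_Z$ and the limit $\limsup$ argument naturally live with $J_\phi$ because the weakly sequentially continuous duality map is $J_\phi$; one must be mildly careful that $B$ uniformly convex with a weakly sequentially continuous $J_\phi$ makes all the cited lemmas simultaneously available, and that the strong convergence $v_{n_j+1}\to l$ (from $\|v_{n+1}-v_n\|\to 0$ and the subsequence argument of Step~3) upgrades weak convergence so that $J_\phi(v_{n_j+1}-z)\to J_\phi(l-z)$. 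Granting these points, the proof is a line-by-line adaptation of the proof of Theorem~1 with one extra triangle inequality fed by Lemma~\ref{l6} at each place where demiclosedness is used.
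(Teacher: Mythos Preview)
Your proposal is correct and follows essentially the same three-step architecture as the paper's own proof: boundedness via Lemma~\ref{l8} and nonexpansiveness of resolvents, the key recursion inequality via Lemma~\ref{l1}, and the Maing\'{e} two-case argument (Lemma~\ref{l7}) together with Lemma~\ref{l5}; the paper likewise inserts Lemma~\ref{l6} exactly once to pass from $\|v_n-J^{A_i}_{r_n}v_n\|\to 0$ to $\|v_n-J^{A_i}_{r}v_n\|\to 0$ before invoking demiclosedness (Lemma~\ref{l4}) and the sunny retraction inequality. One small slip in your closing paragraph: you write ``the strong convergence $v_{n_j+1}\to l$ \ldots\ upgrades weak convergence,'' but in fact only $v_{n_j+1}\rightharpoonup l$ is available (as you correctly stated earlier), and it is precisely the weak sequential continuity of $J_\phi$ that lets you pass to the limit in $\langle u-z,J_\phi(v_{n_j+1}-z)\rangle$; no strong convergence of the subsequence is needed or obtained at that stage.
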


\begin{proof}
The proof consists of three parts.

We note that $Z$ is closed and convex. Set $z=Q_{Z}u$.

Step 1. Prove that $\left\{ v_{n}\right\} _{n\in 
\mathbb{N}
}$, $\left\{ w_{n}\right\} _{n\in 
\mathbb{N}
}$ and $\left\{ J_{r_{n}}^{A_{i}}v_{n}\right\} _{n\in 
\mathbb{N}
,i\in 
\mathbb{N}
\cup \left\{ 0\right\} }$ are bounded. Firstly, we show that $\left\{
v_{n}\right\} _{n\in 
\mathbb{N}
}$ is bounded. Let $p\in Z$ be fixed. By Lemma \ref{l8}, we have the
following inequality%
\begin{eqnarray}
\left\Vert w_{n}-p\right\Vert ^{2} &=&\left\Vert \varphi
_{n,0}v_{n}+\dsum\limits_{i=1}^{\infty }\varphi
_{n,i}J_{r_{n}}^{A_{i}}v_{n}-p\right\Vert ^{2}  \notag \\
&\leq &\varphi _{n,0}\left\Vert v_{n}-p\right\Vert
^{2}+\dsum\limits_{i=1}^{\infty }\varphi _{n,i}\left\Vert
J_{r_{n}}^{A_{i}}v_{n}-p\right\Vert ^{2}-\varphi _{n,0}\varphi _{n,i}g\left(
\left\Vert v_{n}-J_{r_{n}}^{A_{i}}v_{n}\right\Vert \right)  \notag \\
&\leq &\varphi _{n,0}\left\Vert v_{n}-p\right\Vert
^{2}+\dsum\limits_{i=1}^{\infty }\varphi _{n,i}\left\Vert v_{n}-p\right\Vert
^{2}-\varphi _{n,0}\varphi _{n,i}g\left( \left\Vert
v_{n}-J_{r_{n}}^{A_{i}}v_{n}\right\Vert \right)  \notag \\
&=&\left\Vert v_{n}-p\right\Vert ^{2}-\varphi _{n,0}\varphi _{n,i}g\left(
\left\Vert v_{n}-J_{r_{n}}^{A_{i}}v_{n}\right\Vert \right)  \notag \\
&\leq &\left\Vert v_{n}-p\right\Vert ^{2}\text{.}  \label{j1}
\end{eqnarray}

This show that 
\begin{eqnarray*}
\left\Vert v_{n+1}-p\right\Vert &=&\left\Vert \xi _{n}u+\left( 1-\zeta
_{n}\right) J_{r_{n}}^{A_{0}}v_{n}+\left( \zeta _{n}-\xi _{n}\right)
J_{r_{n}}^{A_{0}}w_{n}\text{ }-p\right\Vert \\
&\leq &\xi _{n}\left\Vert u-p\right\Vert +\left( 1-\zeta _{n}\right)
\left\Vert J_{r_{n}}^{A_{0}}v_{n}-p\right\Vert +\left( \zeta _{n}-\xi
_{n}\right) \left\Vert J_{r_{n}}^{A_{0}}w_{n}\text{ }-p\right\Vert \\
&\leq &\xi _{n}\left\Vert u-p\right\Vert +\left( 1-\zeta _{n}\right)
\left\Vert v_{n}-p\right\Vert +\left( \zeta _{n}-\xi _{n}\right) \left\Vert
w_{n}\text{ }-p\right\Vert \\
&\leq &\xi _{n}\left\Vert u-p\right\Vert +\left( 1-\xi _{n}\right)
\left\Vert v_{n}-p\right\Vert \\
&\leq &\max \left\{ \left\Vert u-p\right\Vert \text{, }\left\Vert
v_{n}-p\right\Vert \text{ }\right\}
\end{eqnarray*}

If we continue the way of induction, we have%
\begin{equation*}
\left\Vert v_{n+1}-p\right\Vert =\max \left\{ \left\Vert u-p\right\Vert 
\text{, }\left\Vert v_{1}-p\right\Vert \text{ }\right\} \text{, }\forall
n\in 
\mathbb{N}
\text{.}
\end{equation*}

Therefore, we conclude that $\left\Vert v_{n+1}-p\right\Vert $ is bounded,
this implies that $\left\{ v_{n}\right\} _{n\in 
\mathbb{N}
}$ is bounded. Furthermore, it is easily show that $\left\{
J_{r_{n}}^{A_{i}}v_{n}\right\} _{n\in 
\mathbb{N}
,i\in 
\mathbb{N}
\cup \left\{ 0\right\} }\ $and $\left\{ w_{n}\right\} _{n\in 
\mathbb{N}
}$ are bounded too.

Step 2. Show that for any $n\in 
\mathbb{N}
$,%
\begin{equation}
\left\Vert v_{n+1}-z\right\Vert ^{2}\leq \left( 1-\xi _{n}\right) \left\Vert
v_{n}-z\right\Vert ^{2}+2\xi _{n}\left\langle u-z,J_{\phi }\left(
v_{n+1}-z\right) \right\rangle \text{.}  \label{j2}
\end{equation}

By considering $\left( \ref{j1}\right) $, we have%
\begin{equation}
\left\Vert w_{n}-z\right\Vert ^{2}=\left\Vert v_{n}-z\right\Vert
^{2}-\varphi _{n,0}\varphi _{n,i}g\left( \left\Vert
v_{n}-J_{r_{n}}^{A_{i}}v_{n}\right\Vert \right) \text{.}  \label{j3}
\end{equation}

$\left( \ref{j3}\right) $ implies that%
\begin{eqnarray}
\left\Vert v_{n+1}-z\right\Vert ^{2} &=&\left\Vert \xi _{n}u+\left( 1-\zeta
_{n}\right) J_{r_{n}}^{A_{0}}v_{n}+\left( \zeta _{n}-\xi _{n}\right)
J_{r_{n}}^{A_{0}}w_{n}\text{ }-z\right\Vert ^{2}  \notag \\
&\leq &\xi _{n}\left\Vert u-z\right\Vert ^{2}+\left( 1-\zeta _{n}\right)
\left\Vert J_{r_{n}}^{A_{0}}v_{n}-z\right\Vert ^{2}+\left( \zeta _{n}-\xi
_{n}\right) \left\Vert J_{r_{n}}^{A_{0}}w_{n}\text{ }-z\right\Vert ^{2} 
\notag \\
&\leq &\xi _{n}\left\Vert u-z\right\Vert ^{2}+\left( 1-\zeta _{n}\right)
\left\Vert v_{n}-z\right\Vert ^{2}+\left( \zeta _{n}-\xi _{n}\right) \left[
\left\Vert v_{n}-z\right\Vert ^{2}-\varphi _{n,0}\varphi _{n,i}g\left(
\left\Vert v_{n}-J_{r_{n}}^{A_{i}}v_{n}\right\Vert \right) \right]
\label{j4} \\
&=&\xi _{n}\left\Vert u-z\right\Vert ^{2}+\left( 1-\xi _{n}\right)
\left\Vert v_{n}-z\right\Vert ^{2}-\zeta _{n}\varphi _{n,0}\varphi
_{n,i}g\left( \left\Vert v_{n}-J_{r_{n}}^{A_{i}}v_{n}\right\Vert \right)
+\xi _{n}\varphi _{n,0}\varphi _{n,i}g\left( \left\Vert
v_{n}-J_{r_{n}}^{A_{i}}v_{n}\right\Vert \right) \text{.}  \notag
\end{eqnarray}

Assume that $K_{2}=\sup \left\{ \left\vert \left\Vert u-z\right\Vert
^{2}-\left\Vert v_{n}-z\right\Vert ^{2}\right\vert +\xi _{n}\varphi
_{n,0}\varphi _{n,i}g\left( \left\Vert
v_{n}-J_{r_{n}}^{A_{i}}v_{n}\right\Vert \right) \right\} $. It is conclude
form $\left( \ref{j4}\right) $ that 
\begin{equation}
\zeta _{n}\varphi _{n,0}\varphi _{n,i}g\left( \left\Vert
v_{n}-J_{r_{n}}^{A_{i}}v_{n}\right\Vert \right) \leq \left\Vert
v_{n}-z\right\Vert ^{2}-\left\Vert v_{n+1}-z\right\Vert ^{2}+\xi _{n}K_{2}%
\text{.}  \label{j5}
\end{equation}

By Lemma \ref{l1} and $\left( \ref{j1}\right) $, we have%
\begin{eqnarray*}
\left\Vert v_{n+1}-z\right\Vert ^{2} &=&\left\Vert \xi _{n}u+\left( 1-\zeta
_{n}\right) J_{r_{n}}^{A_{0}}v_{n}+\left( \zeta _{n}-\xi _{n}\right)
J_{r_{n}}^{A_{0}}w_{n}\text{ }-z\right\Vert ^{2} \\
&=&\left\Vert \xi _{n}\left( u-z\right) +\left( 1-\zeta _{n}\right) \left(
J_{r_{n}}^{A_{0}}v_{n}-z\right) +\left( \zeta _{n}-\xi _{n}\right) \left(
J_{r_{n}}^{A_{0}}w_{n}\text{ }-z\right) \right\Vert ^{2} \\
&\leq &\left\Vert \left( 1-\zeta _{n}\right) \left(
J_{r_{n}}^{A_{0}}v_{n}-z\right) +\left( \zeta _{n}-\xi _{n}\right) \left(
J_{r_{n}}^{A_{0}}w_{n}\text{ }-z\right) \right\Vert ^{2}+2\left\langle \xi
_{n}\left( u-z\right) ,J_{\phi }\left( v_{n+1}-z\right) \right\rangle \\
&\leq &\left( 1-\zeta _{n}\right) \left\Vert
J_{r_{n}}^{A_{0}}v_{n}-z\right\Vert ^{2}+\left( \zeta _{n}-\xi _{n}\right)
\left\Vert J_{r_{n}}^{A_{0}}w_{n}\text{ }-z\right\Vert ^{2}+2\left\langle
\xi _{n}\left( u-z\right) ,J_{\phi }\left( v_{n+1}-z\right) \right\rangle \\
&\leq &\left( 1-\zeta _{n}\right) \left\Vert v_{n}-z\right\Vert ^{2}+\left(
\zeta _{n}-\xi _{n}\right) \left\Vert w_{n}\text{ }-z\right\Vert ^{2}+2\xi
_{n}\left\langle u-z,J_{\phi }\left( v_{n+1}-z\right) \right\rangle \\
&\leq &\left( 1-\zeta _{n}\right) \left\Vert v_{n}-z\right\Vert ^{2}+\left(
\zeta _{n}-\xi _{n}\right) \left\Vert v_{n}-z\right\Vert ^{2}+2\xi
_{n}\left\langle u-z,J_{\phi }\left( v_{n+1}-z\right) \right\rangle \\
&=&\left( 1-\xi _{n}\right) \left\Vert v_{n}-z\right\Vert ^{2}+2\xi
_{n}\left\langle u-z,J_{\phi }\left( v_{n+1}-z\right) \right\rangle \text{.}
\end{eqnarray*}

Step 3. We show that $v_{n}\rightarrow z$ as $n\rightarrow \infty $.

For this step, we will examine two cases.

Case 1. Suppose that there exists $n_{0}\in 
\mathbb{N}
$ such that $\left\{ \left\Vert v_{n}-z\right\Vert \right\} _{n\geq n_{0}}$
is nonincreasing. furthermore, the sequence $\left\{ \left\Vert
v_{n}-z\right\Vert \right\} _{n\in 
\mathbb{N}
}$ is convergent. Thus, it is clear that $\left\Vert v_{n}-z\right\Vert
^{2}-\left\Vert v_{n+1}-z\right\Vert ^{2}\rightarrow 0$ as $n\rightarrow
\infty $. In view of condition $\left( 4\right) $ and $\left( \ref{j5}%
\right) $, we have%
\begin{equation*}
\lim_{n\rightarrow \infty }g\left( \left\Vert
v_{n}-J_{r_{n}}^{A_{i}}v_{n}\right\Vert \right) =0\text{.}
\end{equation*}%
From the properties of $g$, we have 
\begin{equation*}
\lim_{n\rightarrow \infty }\left\Vert
v_{n}-J_{r_{n}}^{A_{i}}v_{n}\right\Vert =0\text{.}
\end{equation*}%
Also, we can construct the sequences $\left( w_{n}-v_{n}\right) $ and $%
\left( v_{n+1}-w_{n}\right) $, as follows:%
\begin{eqnarray}
w_{n}-v_{n} &=&\varphi _{n,0}v_{n}+\dsum\limits_{i=1}^{\infty }\varphi
_{n,i}J_{r_{n}}^{A_{i}}v_{n}-v_{n}  \notag \\
&=&\dsum\limits_{i=1}^{\infty }\varphi _{n,i}\left(
J_{r_{n}}^{A_{i}}v_{n}-v_{n}\right) \text{,}  \label{j6}
\end{eqnarray}

and%
\begin{equation*}
v_{n+1}-w_{n}=\xi _{n}u+\left( 1-\zeta _{n}\right)
J_{r_{n}}^{A_{0}}v_{n}+\left( \zeta _{n}-\xi _{n}\right)
J_{r_{n}}^{A_{0}}w_{n}\text{ }-w_{n}
\end{equation*}%
\begin{eqnarray}
\left\Vert v_{n+1}-w_{n}\right\Vert &=&\left\Vert \xi _{n}\left(
u-w_{n}\right) +\left( 1-\zeta _{n}\right) \left(
J_{r_{n}}^{A_{0}}v_{n}-w_{n}\right) +\left( \zeta _{n}-\xi _{n}\right)
\left( J_{r_{n}}^{A_{0}}w_{n}\text{ }-w_{n}\right) \right\Vert  \notag \\
&\leq &\xi _{n}\left\Vert u-w_{n}\right\Vert +\left( 1-\zeta _{n}\right)
\left\Vert J_{r_{n}}^{A_{0}}v_{n}-w_{n}\right\Vert +\left( \zeta _{n}-\xi
_{n}\right) \left\Vert J_{r_{n}}^{A_{0}}w_{n}\text{ }-w_{n}\right\Vert . 
\notag
\end{eqnarray}

These imply that%
\begin{equation}
\lim_{n\rightarrow \infty }\left\Vert v_{n+1}-w_{n}\right\Vert =0\text{ \ \
\ \ \ \ \ \ \ \ \ \ and \ \ \ \ }\lim_{n\rightarrow \infty }\left\Vert
w_{n}-v_{n}\text{\ }\right\Vert =0\text{.\ \ \ \ }  \label{m}
\end{equation}

By the expressions in $\left( \ref{m}\right) $, we obtain%
\begin{equation*}
\left\Vert v_{n+1}-v_{n}\right\Vert \leq \left\Vert w_{n}-v_{n}\text{\ }%
\right\Vert +\left\Vert v_{n+1}-w_{n}\right\Vert \text{.}
\end{equation*}

This implies that 
\begin{equation}
\lim_{n\rightarrow \infty }\left\Vert v_{n+1}-v_{n}\right\Vert =0\text{.}
\label{j8}
\end{equation}

By Lemma \ref{l6} and $\left( \ref{j6}\right) $, we have%
\begin{equation*}
\left\Vert v_{n}-J_{r}^{A_{i}}v_{n}\right\Vert \leq \left\Vert
v_{n}-J_{r_{n}}^{A_{i}}v_{n}\right\Vert +\left\Vert
J_{r_{n}}^{A_{i}}v_{n}-J_{r}^{A_{i}}v_{n}\right\Vert \leq \left\Vert
v_{n}-J_{r_{n}}^{A_{i}}v_{n}\right\Vert +\frac{\left\vert r_{n}-r\right\vert 
}{r_{n}}\left\Vert v_{n}-J_{r_{n}}^{A_{i}}v_{n}\right\Vert .
\end{equation*}

This implies that 
\begin{equation*}
\lim_{n\rightarrow \infty }\left\Vert v_{n}-J_{r}^{A_{i}}v_{n}\right\Vert =0%
\text{, for all }i\in 
\mathbb{N}
.
\end{equation*}

Previously, we have shown that the sequence $\left\{ v_{n}\right\} _{n\in 
\mathbb{N}
}$ is bounded. Therefore, there exists a subsequence $\left\{
v_{n_{j}}\right\} _{j\in 
\mathbb{N}
}$ of $\left\{ v_{n}\right\} _{n\in 
\mathbb{N}
}$ such that $v_{n_{j}+1}\rightarrow l\in F\left( J_{r}^{A_{i}}v_{n}\right) $
for all $j\in 
\mathbb{N}
$. This, together with Lemma \ref{l1} implies that%
\begin{eqnarray}
\limsup_{n\rightarrow \infty }\left\langle u-z,J_{\phi }\left(
v_{n+1},z\right) \right\rangle &=&\lim_{k\rightarrow \infty }\left\langle
u-z,J_{\phi }\left( v_{n_{j}+1}-z\right) \right\rangle  \notag \\
&=&\left\langle u-z,J_{\phi }\left( l-z\right) \right\rangle  \label{j9} \\
&\leq &0\text{.}  \notag
\end{eqnarray}

By Lemma $\left( \ref{l5}\right) $, we obtain the desired \ result.

Case 2. Let $\left\{ n_{j}\right\} _{j\in 
\mathbb{N}
}$ be subsequence of $\left\{ n\right\} _{n\in 
\mathbb{N}
}$ such that 
\begin{equation*}
\left\Vert v_{n_{j}}-z\right\Vert \leq \left\Vert v_{n_{j}+1}-z\right\Vert 
\text{, for all }j\in 
\mathbb{N}
\text{.}
\end{equation*}

Then, in view of Lemma $\left( \ref{l7}\right) $, there exists a
nondecreasing sequence $\left\{ m_{k}\right\} _{k\in 
\mathbb{N}
}\subset 
\mathbb{N}
$, and hence%
\begin{equation*}
\left\Vert z-v_{m_{k}}\right\Vert <\left\Vert z-v_{m_{k}+1}\right\Vert \text{
\ \ \ \ \ and \ \ \ }\left\Vert z-v_{k}\right\Vert \leq \left\Vert
z-v_{m_{k}+1}\right\Vert \text{, }\forall k\in 
\mathbb{N}
\text{.\ \ \ }
\end{equation*}

If we rewrite the equation $\left( \ref{5}\right) $ for this Lemma, we have 
\begin{eqnarray*}
\zeta _{m_{k}}\varphi _{m_{k},0}\varphi _{m_{k},i}g\left( \left\Vert
v_{m_{k}}-J_{r_{n}}^{A_{i}}v_{m_{k}}\right\Vert \right) &\leq &\left\Vert
v_{m_{k}}-z\right\Vert ^{2}-\left\Vert v_{m_{k}+1}-z\right\Vert ^{2}+\xi
_{m_{k}}K_{2} \\
&\leq &\xi _{m_{k}}K_{2}\text{, }\forall k\in 
\mathbb{N}
\text{.}
\end{eqnarray*}

Considering the conditions $\left( 1\right) $ and $\left( 2\right) $, we
obtain%
\begin{equation*}
\lim_{k\rightarrow \infty }g\left( \left\Vert
v_{m_{k}}-J_{r_{n}}^{A_{i}}v_{m_{k}}\right\Vert \right) =0\text{.}
\end{equation*}

It follows that%
\begin{equation*}
\lim_{k\rightarrow \infty }\left\Vert
v_{m_{k}}-J_{r_{n}}^{A_{i}}v_{m_{k}}\right\Vert =0\text{.}
\end{equation*}

Therefore, using the same argument as Case 1, we have%
\begin{equation*}
\limsup_{n\rightarrow \infty }\left\langle u-z,J_{\phi }\left(
v_{m_{k}},z\right) \right\rangle =\limsup_{n\rightarrow \infty }\left\langle
u-z,J_{\phi }\left( v_{v_{m_{k}}+1},z\right) \right\rangle \leq 0\text{.}
\end{equation*}

Using $\left( \ref{j2}\right) $, we get%
\begin{equation*}
\left\Vert v_{m_{k}+1}-z\right\Vert ^{2}\leq \left( 1-\xi _{m_{k}}\right)
\left\Vert v_{m_{k}}-z\right\Vert ^{2}+2\xi _{m_{k}}\left\langle u-z,J_{\phi
}\left( v_{m_{k}+1}-z\right) \right\rangle \text{.}
\end{equation*}

Previously, we have shown that the inequality $\left\Vert
v_{m_{k}}-z\right\Vert \leq \left\Vert v_{m_{k}+1}-z\right\Vert $ is
performed, and hence%
\begin{eqnarray*}
\xi _{m_{k}}\left\Vert v_{m_{k}}-z\right\Vert ^{2} &\leq &\left\Vert
v_{m_{k}}-z\right\Vert ^{2}-\left\Vert v_{m_{k}+1}-z\right\Vert ^{2}+2\xi
_{m_{k}}\left\langle u-z,J_{\phi }\left( v_{m_{k}+1}-z\right) \right\rangle
\\
&\leq &2\xi _{m_{k}}\left\langle u-z,J_{\phi }\left( v_{m_{k}+1}-z\right)
\right\rangle \text{.}
\end{eqnarray*}

Hence, we get%
\begin{equation}
\lim_{k\rightarrow \infty }\left\Vert v_{m_{k}}-z\right\Vert =0\text{.}
\label{j10}
\end{equation}

Considering the expressions $\left( \ref{j9}\right) $ and $\left( \ref{j10}%
\right) $, we obtain%
\begin{equation*}
\lim_{k\rightarrow \infty }\left\Vert v_{m_{k}+1}-z\right\Vert =0\text{.}
\end{equation*}

Finally, we get $\left\Vert v_{k}-z\right\Vert \leq \left\Vert
v_{m_{k}+1}-z\right\Vert $, $\forall k\in 
\mathbb{N}
$.\ It follows that $v_{m_{k}}\rightarrow z$ as $k\rightarrow \infty $. Then
we have $v_{k}\rightarrow z$ as $n\rightarrow \infty $.
\end{proof}

\begin{theorem}
Let $B$ be a real uniformly convex Banach space having a G\^{a}teaux
differentiable norm. and $C$ be a $ccs$ of $B$ such that $\overline{D(A_{i})}%
\subset C\subset \dbigcap\limits_{r>0}^{\infty }R(I+rA_{i})$ for each $i\in
N $. Assume that $\left\{ A_{i}\right\} _{i\in 
\mathbb{N}
\cup \left\{ 0\right\} }$ is an infinite family of accretive operators
satisfying the range condition, and $r_{n}>0$ and $r>0$ be such that $%
lim_{n\rightarrow \infty }r_{n}=r$. Let $%
J_{r_{n}}^{A_{i}}=(I+r_{n}A_{i})^{-1}$ be the resolvent of $A$. Let $\left\{
v_{n}\right\} _{n\in 
\mathbb{N}
}$ be a sequence generated by%
\begin{equation}
\left\{ 
\begin{array}{c}
v_{1}\text{, }u\in C\text{ arbitrarily chosen, \ \ \ \ \ \ \ \ \ \ \ \ \ \ \
\ \ \ \ \ \ \ } \\ 
v_{n+1}=\xi _{n}u+\left( 1-\zeta _{n}\right) J_{r_{n}}^{A_{0}}v_{n}+\left(
\zeta _{n}-\xi _{n}\right) J_{r_{n}}^{A_{0}}w_{n}\text{ \ \ } \\ 
w_{n}=\varphi _{n,0}v_{n}+\dsum\limits_{i=1}^{\infty }\varphi
_{n,i}J_{r_{n}}^{A_{i}}v_{n}\text{, \ }\ n\geq 0\text{,\ \ \ \ \ \ \ \ \ }%
\end{array}%
\right.
\end{equation}

where$\left\{ \zeta _{n}\right\} _{n\in 
\mathbb{N}
}$, $\left\{ \xi _{n}\right\} _{n\in 
\mathbb{N}
}$ and $\left\{ \varphi _{n,i}\right\} _{n\in 
\mathbb{N}
,i\in 
\mathbb{N}
\cup \left\{ 0\right\} }$ are sequences in $\left[ 0,1\right] $ satisfying
the following control conditions:

\qquad $\left( 1\right) ~\lim_{n\rightarrow \infty }\xi _{n}=0$;

\qquad $\left( 2\right) ~\dsum\limits_{n=1}^{\infty }\xi _{n}=\infty $;

\qquad $\left( 3\right) ~\varphi _{n,0}+\dsum\limits_{i=1}^{\infty }\varphi
_{n,i}=1$, for all $n\in 
\mathbb{N}
$;

\qquad $\left( 4\right) ~\lim \inf_{n\rightarrow \infty }\zeta _{n}\varphi
_{n,0}\varphi _{n,i}>0$, for all $n\in 
\mathbb{N}
$.

If $Q_{Z}:B\rightarrow Z$ is the sunny nonexpansive retraction such that $%
Z=\dbigcap\limits_{i=1}^{\infty }A_{i}^{-1}\left( 0\right) \neq \varnothing $%
, then $\left\{ v_{n}\right\} _{n\in 
\mathbb{N}
}$ converges strongly as $n\rightarrow \infty $ to $Q_{Z}u$.
\end{theorem}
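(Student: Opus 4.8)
The plan is to run, essentially verbatim, the three-step scheme used for the two preceding theorems, the only genuine change being the device employed at the end to force the $\limsup$ term to be nonpositive. Note first that a G\^{a}teaux differentiable norm makes $B$ smooth, so the normalized duality map $J$ is single valued, and that a uniformly convex space is reflexive and strictly convex. Set $z=Q_{Z}u$; recall $F(J_{r}^{A_{i}})=A_{i}^{-1}(0)$, so $Z=\bigcap_{i\geq 1}F(J_{r}^{A_{i}})$ is closed and convex.

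\emph{Steps 1 and 2} are literal repetitions of the corresponding steps of the previous theorem, with the normalized $J$ replacing $J_{\phi}$ and with the $\|\cdot\|^{2}$ version of Lemma \ref{l1}. Each resolvent $J_{r_{n}}^{A_{i}}$ is nonexpansive and fixes every $p\in Z$, so Lemma \ref{l8} gives $\|w_{n}-p\|^{2}\leq\|v_{n}-p\|^{2}-\varphi_{n,0}\varphi_{n,i}\,g(\|v_{n}-J_{r_{n}}^{A_{i}}v_{n}\|)\leq\|v_{n}-p\|^{2}$; inserting this into the iteration and using $\zeta_{n}\geq\xi_{n}$ yields $\|v_{n+1}-p\|\leq\max\{\|u-p\|,\|v_{1}-p\|\}$ by induction, whence $\{v_{n}\}$, $\{w_{n}\}$ and $\{J_{r_{n}}^{A_{i}}v_{n}\}$ are bounded. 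Writing $v_{n+1}-z=\xi_{n}(u-z)+(1-\zeta_{n})(J_{r_{n}}^{A_{0}}v_{n}-z)+(\zeta_{n}-\xi_{n})(J_{r_{n}}^{A_{0}}w_{n}-z)$ and using convexity of $\|\cdot\|^{2}$, Lemma \ref{l1}, nonexpansivity of $J_{r_{n}}^{A_{0}}$ and the estimate for $w_{n}$, one gets
\[
\|v_{n+1}-z\|^{2}\leq(1-\xi_{n})\|v_{n}-z\|^{2}+2\xi_{n}\langle u-z,\,J(v_{n+1}-z)\rangle
\]
together with the companion inequality $\zeta_{n}\varphi_{n,0}\varphi_{n,i}\,g(\|v_{n}-J_{r_{n}}^{A_{i}}v_{n}\|)\leq\|v_{n}-z\|^{2}-\|v_{n+1}-z\|^{2}+\xi_{n}K$, where $K$ is a constant furnished by Step 1.

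\emph{Step 3} splits into the two Maing\'{e} cases of Lemma \ref{l7}. In Case 1, $\{\|v_{n}-z\|\}$ is eventually nonincreasing, hence convergent, so $\|v_{n}-z\|^{2}-\|v_{n+1}-z\|^{2}\to0$; condition $(4)$ and the companion inequality force $g(\|v_{n}-J_{r_{n}}^{A_{i}}v_{n}\|)\to0$, hence $\|v_{n}-J_{r_{n}}^{A_{i}}v_{n}\|\to0$ by the properties of $g$, and then Lemma \ref{l6} together with $r_{n}\to r$ gives $\|v_{n}-J_{r}^{A_{i}}v_{n}\|\to0$ for every $i$. From the iteration one also obtains $\|w_{n}-v_{n}\|\to0$ and $\|v_{n+1}-w_{n}\|\to0$ (using $\xi_{n}\to0$ and nonexpansivity of $J_{r_{n}}^{A_{0}}$), hence $\|v_{n+1}-v_{n}\|\to0$ and, combining, $\|v_{n+1}-J_{r}^{A_{i}}v_{n+1}\|\to0$. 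Case 2 is the same computation carried along the index sequence $\{m_{k}\}$ produced by Lemma \ref{l7}. In both cases everything is reduced, via the recursion above and Lemma \ref{l5}, to proving
\[
\limsup_{n\to\infty}\langle u-z,\,J(v_{n+1}-z)\rangle\leq0 .
\]

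\emph{The main obstacle} is precisely this last inequality, and it is the only place where the present hypothesis is weaker than before: since $J$ is not assumed weakly sequentially continuous, one cannot pass to the limit in $\langle u-z,J(v_{n_{j}+1}-z)\rangle$ along a weakly convergent subsequence, as was done for the previous theorem. The substitute is the standard machinery available under a (uniformly) G\^{a}teaux differentiable norm. First, every weak subsequential limit $q$ of $\{v_{n}\}$ lies in $Z$: from $\|v_{n}-J_{r}^{A_{i}}v_{n}\|\to0$ and the demiclosedness of $I-J_{r}^{A_{i}}$ at $0$ (Browder's demiclosedness principle, valid in the uniformly convex $B$) one gets $q\in F(J_{r}^{A_{i}})=A_{i}^{-1}(0)$ for all $i\geq1$, so $q\in Z$. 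Next, choose a nonexpansive self-map $W$ of $C$ with $F(W)=Z$ (a strictly convex combination of the $J_{r}^{A_{i}}$ works, since $B$ is strictly convex, and then $\|v_{n}-Wv_{n}\|\to0$ too) and, for $t\in(0,1)$, let $z_{t}$ be the unique fixed point of the contraction $x\mapsto tu+(1-t)Wx$. By the classical strong-convergence theorem for this approximating curve in a uniformly convex space with uniformly G\^{a}teaux differentiable norm (Reich's theorem; the counterpart of Lemma \ref{l3} under the present hypotheses), $z_{t}\to Q_{Z}u=z$ as $t\to0^{+}$ and $z$ solves $\langle u-z,J(p-z)\rangle\leq0$ for all $p\in Z$. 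Using $\|v_{n}-Wv_{n}\|\to0$, the $\|\cdot\|^{2}$ inequality of Lemma \ref{l1}, and $\langle z_{t}-v_{n},J(z_{t}-v_{n})\rangle=\|z_{t}-v_{n}\|^{2}$, a short computation gives $\limsup_{n}\langle u-z_{t},J(z_{t}-v_{n})\rangle\leq\frac{t}{2}M^{2}$ with $M$ bounding $\|z_{t}-v_{n}\|$; letting $t\to0^{+}$, using $z_{t}\to z$ together with the norm-to-weak$^{\ast}$ uniform continuity of $J$ on bounded sets to replace $J(z_{t}-v_{n})$ by $J(z-v_{n})$ uniformly in $n$, and then shifting the index (legitimate because $\|v_{n+1}-v_{n}\|\to0$), one obtains the required bound. (An equivalent route runs through Banach limits: $\varphi(x)=\mathrm{LIM}_{n}\|v_{n}-x\|^{2}$ is continuous, convex and coercive, and a subdifferential computation yields $\mathrm{LIM}_{n}\langle u-z,J(v_{n}-z)\rangle\leq0$.) Once the $\limsup$ bound is in hand, Lemma \ref{l5} closes Case 1 and the Maing\'{e} estimates close Case 2, so $v_{n}\to Q_{Z}u$; Steps 1 and 2 and the remaining bookkeeping in Step 3 are routine transcriptions of the earlier proofs, so this substitute for weak sequential continuity of the duality map is the heart of the matter.
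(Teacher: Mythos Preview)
The paper actually supplies \emph{no proof} for this last theorem: immediately after its statement the text jumps to the Acknowledgement and the bibliography. There is therefore nothing in the paper to compare your argument against; the authors presumably intend the reader to transcribe the proof of the preceding theorem and replace the weak-sequential-continuity step by something else, but they neither say so nor indicate what that something is.

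Your proposal fills this gap with the standard device for G\^{a}teaux-differentiable-norm settings (the approximating curve $z_{t}=tu+(1-t)Wz_{t}$ and Reich's strong-convergence theorem, or equivalently a Banach-limit argument), and the rest of your write-up is the correct transcription of Steps~1--3 from the previous proof. One point worth flagging: the tools you invoke at the crucial moment---norm-to-weak$^{\ast}$ uniform continuity of $J$ on bounded sets and Reich's theorem on the curve $z_{t}$---are known under a \emph{uniformly} G\^{a}teaux differentiable norm, whereas the theorem as stated assumes only G\^{a}teaux differentiability. You have signalled this yourself with the parenthetical ``(uniformly)''; since the paper offers no proof, it gives no hint how (or whether) the authors meant to get by with the weaker hypothesis, and it is quite possible the missing ``uniformly'' is simply an oversight in the statement.
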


\begin{acknowledgement}
Two authors would like to thank Yildiz Technical University Scientific
Research Projects Coordination Department under project number BAPK
2014-07-03-DOP02 for financial support during the preparation of this
manuscript.
\end{acknowledgement}

\end{document}